\pgfplotsset{compat=1.15}
\newtheorem{theorem}{Theorem}
\theoremstyle{definition}
\newtheorem{definition}[theorem]{Definition}
\theoremstyle{remark}
\theoremstyle{remark}
\newtheorem{remark}[theorem]{Remark}
\theoremstyle{theorem}
\theoremstyle{theorem}
\theoremstyle{theorem}
\newtheorem{proposition}[theorem]{Proposition}
\theoremstyle{theorem}
\newtheorem{corollary}[theorem]{Corollary}
\newtheorem*{mainthm}{Main Theorem}
\newcommand{\bigo}[1]{\mathcal{O}\left(#1\right)}
\newcommand{\ZZ}{\mathbb{Z}}
\newcommand{\RR}{\mathbb{R}}
\renewcommand{\SS}{\mathbb{S}}
\newcommand{\TT}{\mathbb{T}}
\definecolor{deeppink}{rgb}{1,0.078,0.576}
\begin{document}

\title[]{A billiard table close to an ellipse is deformationally spectrally rigid among dihedrally symmetric domains}

\author{Corentin Fierobe}
\address{Department of Mathematics, University of Rome Tor Vergata, Via della Ricerca Scientifica 1, 00133 Rome, Italy}
\email{cpef@gmx.de}

\author{Vadim Kaloshin}
\address{Institute of Science and Technology Austria, Am Campus 1, 3400 Klosterneuburg, Austria}
\email{vadim.kaloshin@gmail.com}

\author{Alfonso Sorrentino}
\address{Department of Mathematics, University of Rome Tor Vergata, Via della Ricerca Scientifica 1, 00133 Rome, Italy}
\email{sorrentino@mat.uniroma2.it}

\begin{abstract} 
We prove that a a strongly convex planar domain (Birkhoff table) with dihedral symmetry, which is sufficiently close in a finitely smooth  topology to an ellipse, is deformationally spectrally rigid within the class of domains preserving this symmetry. More precisely, any smooth one-parameter family of such domains that preserves the length spectrum (i.e., the set of lengths of periodic billiard orbits) must consist only of rigid motions of the initial domain. The proof combines two types of dynamical data: the asymptotic behavior of certain symmetric periodic orbits, as previously used in the rigidity of nearly circular domains, and new spectral information derived from KAM invariant curves, obtained from Mather's beta function and its derivatives (in the Whitney sense) at some suitable rotation numbers.
\end{abstract}

\maketitle


\section{Introduction}

A \emph{Birkhoff billiard} describes the motion of a point particle moving with unit speed inside a strongly convex domain $\Omega \subset \mathbb{R}^2$ with smooth boundary. More precisely, we assume $\partial \Omega$ is a $\mathscr{C}^r$ curve with $r \geq 3$ and that the curvature is strictly positive at every point of the boundary; we call such a domain $\Omega$ a \emph{Birkhoff table}. The particle follows straight-line motion in the interior and reflects elastically at the boundary, with the angle of incidence equal to the angle of reflection.

The dynamics are encoded by the \emph{billiard map} $f_\Omega: M \to M$, where the phase space $M$ consists of inward-pointing unit vectors based at the boundary. For $(x,v) \in M$, the map sends $(x,v)$ to $(x',v')$, where $x'$ is the next collision point and $v'$ is the reflected velocity (see Figure~\ref{figurebilliard}). We refer to \cite{Siburg, Tabach} for a comprehensive introduction to billiards.

\begin{figure}\label{figurebilliard}
\definecolor{qqwuqq}{rgb}{0,0.39215686274509803,0}
\begin{tikzpicture}[line cap=round,line join=round,>=triangle 45,x=2cm,y=2cm]
\clip(-1.8,-1) rectangle (1.8,1);
\draw [shift={(-0.8742616278285551,0.7057918824458678)},line width=0.2pt,color=qqwuqq,fill=qqwuqq,fill opacity=0.10000000149011612] (0,0) -- (-9.235736422648966:0.13014296530989553) arc (-9.235736422648966:29.63185395731343:0.13014296530989553) -- cycle;
\draw [shift={(1.2516955384490343,0.3601010365975154)},line width=0.2pt,color=qqwuqq,fill=qqwuqq,fill opacity=0.10000000149011612] (0,0) -- (-99.07534275578051:0.13014296530989553) arc (-99.07534275578051:-57.93294158117337:0.13014296530989553) -- cycle;
\draw [rotate around={0:(0,0)},line width=1pt] (0,0) ellipse (2.7196525cm and 1.842962213cm);
\draw [line width=0.5pt] (-0.8742616278285551,0.7057918824458678)-- (1.2516955384490343,0.3601010365975154);
\draw [-latex,line width=0.3pt] (-0.8742616278285551,0.7057918824458678) -- (-0.42881442284704246,0.6333600275085152);
\draw [-latex,line width=0.3pt] (1.2516955384490343,0.3601010365975154) -- (1.1856741508046915,-0.05322317657592285);
\begin{scriptsize}
\draw[color=black] (0.19125759568047124,0.35) node {$L(s,s')$};
\draw[color=black] (-0.4,-0.7) node {$\Omega$};
\draw[color=black] (-1,0.75) node {$s$};
\draw[color=black] (-0.6,0.75) node {$\varphi$};
\draw[color=black] (-0.7,0.55) node {$v$};
\draw[color=black] (1.4,0.4) node {$s'$};
\draw[color=black] (1.28,0.05) node {\tiny $\varphi'$};
\draw[color=black] (1.1,0.2) node {$v'$};
\end{scriptsize}
\end{tikzpicture}
\caption{Two successive billiard impact points in a strictly convex billiard domain $\Omega$. Here $f(s,v)=(s',v')$ and $L(s,s')$ measures the distance between the two impact points. } 
\end{figure}
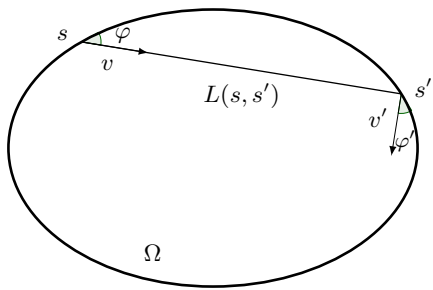

Despite the simple mechanical law, billiard dynamics are remarkably complex and intimately connected to the geometry of the domain. While the shape of the domain completely determines the billiard dynamics, a more subtle and intriguing question is the following: \textit{To what extent can dynamical information be used to reconstruct the shape of a billiard table?}

\begin{remark}\label{remaction}
    Orbits of the billiard map admit a natural variational formulation as critical points of the length functional. In fact, the billiard map is a symplectic twist map whose generating function is the distance between boundary points (see \cite{Siburg, Tabach}), thus the length plays the role of the \emph{action}.
\end{remark}

Therefore, it is natural to study whether the lengths of periodic orbits encode sufficient information to determine the shape of the table. In this article, we focus on this inverse problem, related to the so-called \emph{length spectrum}.

\subsection{Periodic Orbits, Length Spectrum, and Spectral Rigidity}

Periodic orbits have been central to billiard theory since Birkhoff's seminal work \cite{Birkhoff}. Geometrically, each periodic trajectory corresponds to a closed (not necessarily convex) polygon inscribed in $\Omega$, whose segments satisfy the reflection law at each vertex.

For a periodic orbit with $q$ reflections that winds $p$ times around the boundary, the \emph{rotation number} is given by $p/q \in (0, 1/2] \cap \mathbb{Q}$. Birkhoff \cite{Birkhoff} proved that for each $p/q$ in lowest terms, there exist at least two periodic orbits with that rotation number.

\begin{remark}
    Reversing the direction of a periodic billiard trajectory with rotation number $p/q \in (0, 1/2] \cap \mathbb{Q}$ yields a periodic trajectory with rotation number $(q-p)/q \in [1/2,1)$. Hence, it suffices to consider rotation numbers in $(0, 1/2] \cap \mathbb{Q}$.
\end{remark}

\begin{remark}
    In terms of the variational characterization from Remark~\ref{remaction}, for any rotation number $p/q \in (0,1/2] \cap \mathbb{Q}$, one of the periodic orbits in Birkhoff's result is obtained by maximizing the perimeter of inscribed polygons with that rotation number, while the other is obtained via a min-max argument.
\end{remark}

The \emph{length spectrum} of $\Omega$ is defined as:
\[
\mathscr{L}(\Omega) := \mathbb{N}^+ \cdot \{\text{lengths of periodic orbits}\} \cup \mathbb{N}^+ \cdot \ell(\partial\Omega).
\]

A fundamental inverse problem consists in determining how much $\mathscr{L}(\Omega)$ reveals about the geometry of the billiard table and, ideally, whether it determines $\Omega$ up to isometries.

\medskip
\textbf{Question 1 (Global Rigidity).} \textit{If two Birkhoff billiard tables have the same length spectrum, are they necessarily congruent up to rigid motions?}

\begin{remark}
    A deep connection exists between the length spectrum and the Laplace spectrum $\mathrm{Spec}_\Delta(\Omega)$ on $\Omega$ with Dirichlet boundary conditions. Anderson and Melrose \cite{AM} (see also \cite{GM, PS, SafVas}) established that the wave trace
    \[
    w(t) := \mathrm{Re}\left( \sum_{\lambda \in \mathrm{Spec}_\Delta(\Omega)} e^{i\sqrt{-\lambda} t} \right)
    \]
    is well-defined as a distribution and is smooth away from the length spectrum. Specifically, its singular support is contained in $\pm \mathscr{L}(\Omega) \cup \{0\}$, and generically they coincide. This relates directly to Kac's famous question \cite{Kac} ``{\it Can one hear the shape of a drum?}~''. While answered negatively for general manifolds \cite{GordonWebbWolpert}, this question remains open for strongly convex planar smooth domains. Melrose \cite{MelroseIsospectral} and Osgood, Phillips and Sarnak \cite{OsgoodPhillipsSarnak1, OsgoodPhillipsSarnak2, OsgoodPhillipsSarnak3} demonstrated that Laplace-isospectral sets of planar domains are compact in the $\mathscr{C}^{\infty}$ topology.  Vig proved an analogous result for the marked length spectrum \cite{Vig}.
\end{remark}

One can also consider a ``local'' version of Question~1 by studying one-parameter families of billiard tables.

\begin{definition}
    A \emph{one-parameter family} of Birkhoff tables is a family $(\Omega_{\tau})_{\tau \in I}$, where $I \subset \mathbb{R}$ is an interval containing $0$, each $\Omega_{\tau}$ is a Birkhoff table, and the boundary $\partial\Omega_{\tau}$ depends smoothly ($\mathscr{C}^r$, $r \in \mathbb{N}_{>0} \cup \{\infty, \omega\}$) on the parameter $\tau$.
\end{definition}

Given such a family, one can study how the lengths of periodic orbits vary under infinitesimal boundary deformations.

\medskip
\textbf{Question 2 (Deformational Rigidity).} \textit{Let $(\Omega_{\tau})_{\tau \in I}$ be a one-parameter family of Birkhoff tables such that $\mathscr{L}(\Omega_\tau) = \mathscr{L}(\Omega_0)$ for every $\tau \in I$. Is it true that each $\Omega_{\tau}$ is obtained from $\Omega_0$ by a rigid motion?}

A domain $\Omega$ is said to be \emph{rigid} if Question~2 holds for any one-parameter family $(\Omega_{\tau})_{\tau \in I}$ with $\Omega_0 = \Omega$. One can impose further conditions on the deformations; in our case, we require the domains $\Omega_{\tau}$ to have \emph{dihedral symmetry}, {\it i.e.}, to be both axis-symmetric and centrally symmetric.

\subsection{Main Result}

In this article, we provide a partial answer to Question~2 for domains near an ellipse and under symmetry assumptions.

\begin{mainthm}\label{theorem:main}
    Let $\mathscr{E}$ be an ellipse. There exists an integer $r = r(\mathscr{E}) > 0$ and $\varepsilon = \varepsilon(\mathscr{E}) > 0$ such that the following holds. Let $\Omega$ be a Birkhoff table with $\mathscr{C}^{r}$-smooth boundary and dihedral symmetry, which is $\varepsilon$-$\mathscr{C}^r$-close to $\mathscr{E}$. Then, $\Omega$ is rigid under one-parameter smooth deformations within the class of dihedrally symmetric $\mathscr{C}^r$-Birkhoff tables.
\end{mainthm}

\medskip
\begin{remark}
    \textbf{(i)} As will be clear from the proof, $r$ and $\varepsilon$ depend only on the eccentricity of $\mathscr{E}$. \\
    \textbf{(ii)} The $\mathscr{C}^r$-topology on Birkhoff tables (for integer $r > 0$) corresponds to the Whitney $\mathscr{C}^r$-topology on the set of $\mathscr{C}^r$-smooth embeddings $\gamma: \mathbb{S}^1 \to \mathbb{R}^2$ whose image $\gamma(\mathbb{S}^1)$ is the boundary of a Birkhoff table. Two tables $\Omega, \Omega'$ are $\varepsilon$-$\mathscr{C}^r$-close if $\|\partial\Omega - \partial\Omega'\|_{\mathscr{C}^r} \leq \varepsilon$.
\end{remark}

\subsection{Comparison with Previous Literature}

The length spectral rigidity of Birkhoff tables is a central open question in the field. The result most closely related to our main theorem is by De Simoi, Kaloshin, and Wei \cite{DKW}, who proved the following: Let $\mathcal{M}$ be the class of Birkhoff tables with  $\mathscr{C}^9$-smooth boundary and axial symmetry, which are sufficiently close (in the $\mathscr{C}^1$-norm) to a circle. Then, any $\Omega \in \mathcal{M}$ is deformationally spectrally rigid in $\mathcal{M}$, meaning that any $\mathscr{C}^1$-smooth one-parameter length-isospectral family in $\mathcal{M}$ is necessarily an isometric family. 

Other related results focus on another set of information associated to periodic orbits, that is the so-called \emph{Marked length spectrum},
which associates to each rational rotation number $p/q \in (0,1/2] \cap \mathbb{Q}$ the maximal length of periodic orbits with that rotation number. The marked length spectrum is equivalent  to \emph{Mather's $\beta$-function} of the billiard map, a central object in the study of symplectic twist maps (see \cite{Siburg} and also Section \ref{secbetafunction}).

One can also consider the analogous rigidity question for the {marked length spectrum}: if two domains share the same marked length spectrum, must they be isometric? Similarly, one can study one-parameter families of domains with constant marked length spectrum. Note that any deformation preserving the length spectrum automatically preserves the marked length spectrum as well (see \cite{Siburg}).

Results in this direction include:
\begin{itemize}
    \item[{(i)}] If $\Omega$ has the same marked length spectrum as a disk, then it is a disk \cite{Siburg}. Another proof uses Taylor coefficients of Mather's $\beta$-function at $0$ (see \cite{MM, SorDCDS}). Recent work \cite{BBS} discusses how a single value of Mather’s $\beta$-function can determine whether a Birkhoff table is circular.
    
    \smallskip
    \item[(ii)] For ellipses, the situation is less clear. \cite{SorDCDS} showed that the Taylor coefficients of the associated Mather's $\beta$-function at $0$ determine an ellipse within the family of ellipses. \cite{Bialyellipses} computed an explicit expression for Mather's $\beta$-function for elliptic billiards, proving that the value of $\beta$ at $1/4$ and its derivative at $0$ also determine an ellipse uniquely (alternatively, one can consider the values of $\beta$ at $\frac 12$ and any other rational in $(0,\frac 12)$)
    
    \smallskip
    \item[{(iii)}] Results on the marked length spectral rigidity of elliptic billiards are connected to the integrability of the billiard map and the Birkhoff conjecture, which states that the only integrable Birkhoff tables are ellipses and disks. Mather \cite{Mather90} showed that the $\beta$ function is differentiable at a rational $\rho$ if and only if there exists an invariant curve of periodic orbits with rotation number $\rho$. Thus, $\mathscr{C}^1$ regularity of $\beta$ on an interval implies $\mathscr{C}^0$-integrability. This translates recent results on the Birkhoff conjecture into rigidity properties of the associated $\beta$-function.
    
\noindent     More specifically:
    \begin{itemize}
        \item[a)] \textit{(Centrally symmetric case)}: Results in \cite{BialyMironov} imply that if $\Omega$ is centrally symmetric and $\beta$ is differentiable on $(0,1/4]$, then $\Omega$ is an ellipse.
        \item[b)] \textit{(Perturbative case)}: Results in \cite{ADK, KS} imply that if $\Omega$ is $\mathscr{C}^1$-close to an ellipse and $\beta$ is differentiable at all rationals $1/q$ with $q \geq 3$, then $\Omega$ is an ellipse. This can be generalized using \cite{Koval} (see also \cite{HKS}) to consider integrability near the boundary.
    \end{itemize}
\end{itemize}

Other related results for non-Birkhoff billiards include:
\begin{itemize}
    \item Under symmetry and genericity assumptions, \cite{DKL} proved that the marked length spectrum determines the geometry of billiard tables obtained by removing finitely many strictly convex analytic obstacles satisfying the non-eclipse condition.
    \item For Bunimovich stadia and squash-type stadia, \cite{CKZ} established dynamical spectral rigidity under additional symmetry assumptions.
\end{itemize}

\begin{remark}
In a related direction, in \cite{HKS_Duke} the authors showed that for a generic domain, one can recover from the marked length spectrum of the domain the eigendata corresponding to Aubry-Mather periodic orbits ({\it i.e.}, periodic orbits of maximal length among all orbits with the same rotation number).
\end{remark}

\medskip

\subsubsection{Other related works}
Another line of investigation concerns \textit{Laplace spectral rigidity} and Kac's question. For comprehensive overviews, see \cite{HezariZelditch} and the survey \cite{ZelditchSurvey15}. A notable result appears in \cite{HZ2}, where the authors prove that ellipses of sufficiently small eccentricity are Laplace spectrally unique among all smooth domains. For one-parameter families, \cite{HezariZelditch} gave a positive answer for analytic Laplace isospectral deformations of ellipses preserving biaxial reflectional symmetries (see also Popov and Topalov \cite{PopovTopalov1}).
Koval and Vig \cite{KovalVig} showed however that for a large set of domains the Laplace spectrum does not determine the length spectrum.

Another related problem concerns \textit{isospectral deformations of manifolds} without boundary, where the length spectrum consists of the lengths of closed geodesics. The corresponding inverse problem has been studied in various contexts. A smooth family of metrics $(g_\tau)_{|\tau|\leq1}$ on a compact boundaryless Riemannian manifold $(M,g)$ is an \textit{isospectral deformation} if the length spectrum remains constant. The manifold is \textit{spectrally rigid} if it admits no non-trivial isospectral deformations.
Guillemin--Kazhdan~\cite{GuilleminKazhdan} proved spectral rigidity for negatively curved surfaces, a result later extended to higher dimensions~\cite{CS}.

Similarly, one can consider the \textit{marked length spectrum} of a Riemannian manifold, where the role of the rotation number is played by the homotopy class of the closed geodesics. For negatively curved manifolds, the marked length spectrum uniquely determines the metric~\cite{Croke,Otal}. Recently, Guillarmou and Lefeuvre \cite{GL} proved that in all dimensions, the marked length spectrum of a Riemannian manifold $(M,g)$ with Anosov geodesic flow and non-positive curvature locally determines the metric: two sufficiently close metrics with the same marked length spectrum must be isometric.\\
In a similar context \cite{AgapovBialyMironov}, the authors proved the existence of integrable deformations of the Liouville metric on $\TT^2$ by Riemannian metric endowed with a small magnetic potential.

In symplectic dynamics, De la Llave, Marco and Moriy\'on~\cite{dMM} showed that Anosov symplectomorphisms admit no non-trivial deformations preserving the action spectrum, with hyperbolicity and density of periodic points being key factors.

\subsection{Organization of the Article}
The article is organized as follows:
\begin{itemize}
\item Section~\ref{section:outline} provides an outline of {the key idea and the strategy of }the proof of the Main Theorem.
\item In Section~\ref{section:tools_isospectral} we introduce the tools used for studying isospectral deformations: the $\tau$-derivative of Mather's $\beta$-function (see {Subsection~\ref{subsection:tau_variation_beta}}) and the $\tau$-derivative of the perimeters of symmetric periodic orbits (see {Subsection~\ref{subsection:DKW_proof}}). 
\item In {Section~\ref{section:isospectral_operator}} we define  the  isospectral operators, whose invertibility implies the rigidity. 
\item The proof of the Main Theorem is contained in {Section~\ref{section:proof_main}}.
\item Appendix \ref{section:sinus_family} shows that the family $\sin^{2j}(2\pi\theta)$, $j\geq 0$, is equivalent to the family $\cos(4\pi j\theta)$, $j\geq 0$, in $L^2(\SS^1)$.
\item Appendix \ref{section:mobius_operator} introduces two elementary operators on a subspace of $L^2(\SS^1)$ that are one the inverse of the other.
\item The $\tau$-derivative of Mather's $\beta$ function is computed explicitly in the case of ellipses in Appendix \ref{section:ellipses_total}.
\item Appendix \ref{section:operator_completion} provides conditions to deduce the invertibility of the so-called \textit{completions of operators}.
\end{itemize}

\medskip

\subsection{Acknowledgments} 
CF and VK acknowledge the support of the ERC Advanced Grant SPERIG (nr. 885707).
CF and AS  acknowledge the support of the Italian Ministry of University and Research’s  PRIN 2022 grant ``{\it Stability in Hamiltonian dynamics and beyond}’', as well as the Department of Excellence grant MatMod@TOV (2023-27) awarded to the Department of Mathematics of University of Rome Tor Vergata. 
AS is member of the INdAM research group GNAMPA and the UMI group DinAmicI.

\usetikzlibrary{decorations.markings}
\usetikzlibrary{decorations.pathreplacing}
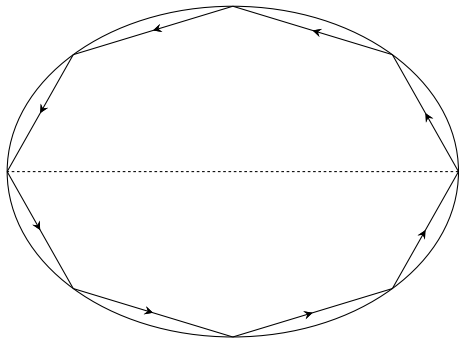
\begin{figure}[h!]
	\label{figure:symmetric_orbit}
	\begin{tikzpicture}[xscale=0.75, yscale =1.1, decoration={
			markings,
			mark=at position 0.5 with {\arrow{>}}}
		] 
		\def\axisX{4} 
		\def\axisY{2} 
		
		\draw[-latex] (0,0) ellipse (\axisX cm and \axisY cm);
		
		\foreach \i in {1,...,8} {
			\coordinate (P\i) at ({\axisX * cos(45 * (\i - 1))}, {\axisY * sin(45 * (\i - 1))});
		}
		
		\draw[-, >=stealth, decoration={markings, mark=at position 0.5 with {\arrow{>}}}, postaction={decorate}] (P1) -- (P2);
		
		\draw[-, >=stealth, decoration={markings, mark=at position 0.5 with {\arrow{>}}}, postaction={decorate}] (P2) -- (P3);
		
		\draw[-, >=stealth, decoration={markings, mark=at position 0.5 with {\arrow{>}}}, postaction={decorate}] (P3) -- (P4);
		
		\draw[-, >=stealth, decoration={markings, mark=at position 0.5 with {\arrow{>}}}, postaction={decorate}] (P4) -- (P5);
		
		\draw[-, >=stealth, decoration={markings, mark=at position 0.5 with {\arrow{>}}}, postaction={decorate}] (P5) -- (P6);
		
		\draw[-, >=stealth, decoration={markings, mark=at position 0.5 with {\arrow{>}}}, postaction={decorate}] (P6) -- (P7);
		
		\draw[-, >=stealth, decoration={markings, mark=at position 0.5 with {\arrow{>}}}, postaction={decorate}] (P7) -- (P8);
		
		\draw[-, >=stealth, decoration={markings, mark=at position 0.5 with {\arrow{>}}}, postaction={decorate}] (P8) -- (P1);
		
		\draw[dash pattern=on 1pt off 1pt] (P1) -- (P5);
        
	\end{tikzpicture}
	\label{figure:AxiallySymmetricOctagon}
	\caption{An axially symmetric billiard orbit of rotation number $\omega = 1/8$.}
\end{figure}

\section{Outline of the proof of the Main Theorem}\label{section:outline}

The proof of the Main Theorem uses two kinds of dynamical information extracted from the length spectrum of a $\mathscr C^r$ Birkhoff billiard table~$\Omega$, which are related to:

\begin{enumerate}
\item[(D1)] Periodic billiard orbits of rotation number $1/q$ (convex inscribed $q$-gons) for $q\geq 3$, as in \cite{DKW}; see Fig.~\ref{figure:AxiallySymmetricOctagon}.
\item[(D2)] Rotational invariant curves with Diophantine rotation number, whose existence is guaranteed by KAM theory for sufficiently smooth boundaries \cite{Lazutkin}.
\end{enumerate}

A smooth isospectral deformation $(\Omega_\tau)$ can be encoded by an even deformation map $n_\tau\in H$, where $H$ is a Sobolev subspace of $L^2(\mathbb S^1)$; the deformation is trivial exactly when $n_\tau\equiv 0$. Under dihedral symmetry, $n_\tau$ is $1/2$-periodic. To prove the Main Theorem, it therefore suffices to show that any isospectral deformation forces its associated map $n$ to vanish.

The main step is to construct a linear operator
\[
T_\Omega : H_{1/2} \to h,
\]
with $H_{1/2}\subset H$ the subspace of $1/2$-periodic functions and $h$ another Sobolev-type sequence space, such that
\[
\text{isospectrality} \quad\Longrightarrow\quad T_\Omega(n)=0.
\]

\medskip

\noindent\textbf{Step 1 (using periodic orbits).}  
From the family of periodic trajectories (D1), one obtains a collection of linear conditions $\ell_q(n)=0$ that all isospectral deformations must satisfy. These define an operator $S$ whose kernel is finite-dimensional and spanned by a small number of low Fourier modes.

\medskip

\noindent\textbf{Step 2 (using KAM curves).}  
The invariant curves (D2) provide additional linear conditions, obtained from the ``KAM density''
$
\mu_\Omega(\omega,x),
$
which satisfy
\[
\int_0^1 n(x)\,\partial_\omega^j \mu_\Omega(\omega,x)\,dx=0,\quad j\geq 0.
\]
For ellipses, the resulting family of functionals is large enough to eliminate all remaining directions in $\ker S$, and this property persists continuously for nearby billiard tables.

\medskip

\noindent\textbf{Conclusion.} Combining the constraints from (D1) and (D2) yields an invertible operator $T_{\Omega}$ extending $S$. Hence if an isospectral deformation satisfies $T_\Omega(n)=0$ then $n=0$. Thus every isospectral deformation is trivial, proving the Main Theorem.

\section{Isospectral deformations with dihedral symmetry}
\label{section:tools_isospectral}

Let $(\Omega_{\tau})_{\tau\in I}$ be a one-parameter family of Birkhoff tables. For any given $\tau\in I$, consider an $|\partial\Omega_{\tau}|$-periodic map $\gamma_{\tau}:\RR\to\RR^2$ parametrizing the boundary of $\partial\Omega_{\tau}$ by arc-length.

In terms of regularity, given an integer $r>0$ we say that the deformation $(\Omega_{\tau})_{\tau\in I}$ is $\mathscr C^r$-smooth if $(\tau,s)\mapsto\gamma_{\tau}(s)$ is a $\mathscr C^r$-smooth map.

\begin{definition}
The familiy $(\Omega_{\tau})_{\tau\in I}$  is said to be \textit{isospectral} if for any $\tau\in I$,
\[
\mathscr L(\Omega_{\tau}) = \mathscr L(\Omega_{0}).
\]
\end{definition}

It is easy to deduce that if the familiy $(\Omega_{\tau})_{\tau\in I}$  is isospectral then all the domains $\Omega_{\tau}$ 
have the same perimeter (see \cite[Proposition 3.2.8]{Siburg}).

\subsection{Mather's $\beta$-function}\label{secbetafunction}
 
Let $\Omega \subset \mathbb{R}^2$ be a Birkhoff table parametrized by arc-length by the function $s\mapsto\gamma(s)$, and let 
\[
f : M \to M, \qquad M = \RR/|\partial\Omega|\ZZ\times (0,\pi),
\]
denote the associated billiard map. 
The billiard dynamics admits a variational formulation: if $(s_i)_{i \in \mathbb{Z}}$ is a bi-infinite sequence 
of boundary points, its associated \textit{action} is 
\[
A((s_k)) =- \sum_{k \in \mathbb{Z}} |\gamma(s_{k+1}) - \gamma(s_k)|,
\]
given by the sum of chord lengths. A configuration $(s_i)$ is called \emph{minimizing} if every finite segment 
$(s_m, \dots, s_n)$, with $m\leq n$, minimizes the \textit{action} among all admissible sequences with the same endpoints. 
Equivalently, each finite portion of the orbit realizes the shortest possible polygonal path connecting its 
endpoints with reflections inside $\Omega$. 

A \emph{minimizing orbit} of the billiard map is an orbit whose impact sequence $(s_i)$ is a minimizing configuration 
for the action functional. Such orbits exist for every rotation number \cite{ForniMather, Gole} and provides the variational foundation for Mather's $\beta$-function. 

Given a minimizing orbit $\underline s = (s_k)$, the following limit
\[
\omega = \lim_k \frac{s_k}{k}
\]
is well-defined as a number of $[0,1]$ called rotation number of $\underline s$. Mather's $\beta$-function can be defined by the following formula

\begin{definition}
Mather's beta-function is given for any $\omega\in(0,1/2]$ by 
\begin{equation}
\beta(\omega) = -\lim_{N\to+\infty} \frac{1}{2N}\sum_{k=-N}^{N-1} |\gamma(s_{k+1}) - \gamma(s_k)|.
\end{equation}
\end{definition}

where the sequence $(s_k)_k$ corresponds to a minimizing orbit of rotation number $\omega$.
It is known \cite{FKS, Siburg} that $\beta$ is a strictly convex function of $\omega$. It is related to the so-called marked length-spectrum of a billiard. It has therefore properties related to isospectral deformations of domains which we describe below. 

Let $(\Omega_{\tau})_{\tau\in I}$ be a one-parameter family of domains. We can consider for any $\tau$ the Mather's $\beta$-function $\beta_{\tau}$ associated to the domain $\Omega_{\tau}$.

\begin{theorem}[\cite{FKS} or \cite{Siburg}]
\label{theorem:beta_isospectral_domains}
A $\mathscr C^3$-smooth deformation $(\Omega_{\tau})_{\tau\in I}$ is isospectral if and only if $\beta_{\tau}=\beta_0$ for any $\tau\in I$.
\end{theorem}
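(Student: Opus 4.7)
The plan is to establish the two implications separately.

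For the reverse implication, suppose $\beta_\tau = \beta_0$ for every $\tau \in I$. For each rational $p/q \in (0,1/2)$ in lowest terms, the quantity $-q\,\beta_\tau(p/q) = -q\,\beta_0(p/q)$ is by construction the length of the Birkhoff action-minimizing $q$-periodic orbit of rotation number $p/q$ in $\Omega_\tau$, and hence belongs to both $\mathscr L(\Omega_\tau)$ and $\mathscr L(\Omega_0)$. This already identifies the subset of length spectra coming from Birkhoff minimizers. To cover min-max Birkhoff orbits and any additional periodic orbits (for instance those produced by elliptic islands), I would invoke the Aubry--Mather theory for twist maps developed in Golé: the closure of the set of periodic orbit lengths is encoded by $\beta$ together with its one-sided derivatives at rationals, which pin down the min-max lengths and the gap structure of Aubry--Mather sets. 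Equality of $\beta_\tau$ with $\beta_0$ on the whole interval $[0,1/2]$ forces equality of these one-sided derivatives wherever they exist, yielding $\mathscr L(\Omega_\tau) = \mathscr L(\Omega_0)$.

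For the forward implication, suppose the deformation is isospectral, and fix $p/q$ in lowest terms. Set $L_\tau^{p/q} := -q\,\beta_\tau(p/q)$, the length of the Birkhoff minimizer of rotation number $p/q$ in $\Omega_\tau$. Because $\gamma_\tau$ depends $\mathscr C^3$-smoothly on $(\tau,s)$, and because the Hessian of the action at a Birkhoff minimizer is positive definite, the implicit function theorem produces a continuous (in fact $\mathscr C^1$) branch of Birkhoff minimizers $(s_0(\tau),\dots,s_{q-1}(\tau))$, so $\tau \mapsto L_\tau^{p/q}$ is continuous. Since it takes values in the fixed closed set $\mathscr L(\Omega_0)$ on the connected interval $I$, I would conclude constancy by showing that $L_0^{p/q}$ is locally isolated in $\mathscr L(\Omega_0)$. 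For $p = 1$ and $q$ large, this follows from the Marvizi--Melrose asymptotic expansion of the lengths $L_{1/q}$ near $|\partial\Omega_0|$, which makes them mutually isolated. For general $p/q$, the isolation follows from the finiteness of periodic orbits with a bounded number of bounces in a $\mathscr C^3$-smooth convex billiard, combined with an analogous asymptotic analysis for $(p,q)$-orbits. Constancy of $L_\tau^{p/q}$ yields $\beta_\tau(p/q) = \beta_0(p/q)$ at every rational, and continuity of $\beta$ on $[0,1/2]$ (ensured by strict convexity) extends the identity to $\beta_\tau \equiv \beta_0$.

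The main obstacle is precisely this isolation step: since $\mathscr L(\Omega_0)$ can accumulate (for instance at integer multiples of $|\partial\Omega_0|$), a continuous path valued in it could a priori drift between neighboring values, breaking the labeling by rotation number. The most robust way around this, implicit in Golé's treatment, is to propagate not only the length but the full Birkhoff minimizing orbit under the parameter $\tau$: the continuation of a Birkhoff minimizer remains a Birkhoff minimizer by nondegeneracy of its Hessian, its length is automatically $-q\,\beta_\tau(p/q)$, and invariance of the unmarked spectrum then suffices to pin $\beta_\tau(p/q)$ to $\beta_0(p/q)$. Together with the density of rationals and continuity of $\beta$, this closes both implications.
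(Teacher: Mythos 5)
Your forward implication stalls exactly at the point you flag: you need the continuous function $\tau\mapsto L^{p/q}_\tau=-q\,\beta_\tau(p/q)$, which takes values in the fixed closed set $\mathscr L(\Omega_0)$, to be constant, and you try to get this from local isolation of $L^{p/q}_0$ in $\mathscr L(\Omega_0)$. That isolation is not available: a $\mathscr C^3$ (even $\mathscr C^\infty$) strictly convex billiard can have infinitely many periodic orbits with a given number of bounces (the circle and the ellipse carry continuum families), so the ``finiteness of periodic orbits with a bounded number of bounces'' you invoke is false, and Marvizi--Melrose asymptotics only separate the $1/q$-lengths from one another, not from the lengths of all other $(p,q)$-orbits that also populate $\mathscr L(\Omega_0)$. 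For the same reason your implicit-function-theorem step fails: the Hessian of the action at a Birkhoff minimizer is only positive semi-definite, and is degenerate precisely when minimizers come in families. (Continuity of $L^{p/q}_\tau$ is still true, but by the elementary fact that the minimum of the action over the compact configuration space depends continuously on $\tau$.) The argument the paper uses, and which you are missing, is measure-theoretic rather than topological: by Sard's theorem the critical values of the length functional on inscribed polygons form a null set, hence $\mathscr L(\Omega_0)$ has Lebesgue measure zero; the image of the connected interval $I$ under the continuous map $\tau\mapsto L^{p/q}_\tau$ is then an interval contained in a null set, hence a single point. This yields constancy of $\beta_\tau$ at every rational with no isolation needed, and continuity of $\beta$ finishes as you say.

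Your reverse implication also rests on an unproved and delicate claim, namely that $\beta$ together with its one-sided derivatives at rationals determines the closure of the lengths of \emph{all} periodic orbits, including non-minimizing ones inside elliptic islands; Aubry--Mather theory controls minimizing and minimax orbits only, so this step requires a genuine argument or a precise reference. The paper's own sketch addresses only the forward direction and defers the rest to \cite{FKS} and \cite{Siburg}, so this half is less central here, but as written your version of it is not a proof.
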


The idea of the proof relies on Sard's Lemma, from which it follows that the set $\mathscr L(\Omega_{\tau})$ is a set of zero measure, and hence minimizing periodic orbits of $\Omega_{\tau}$ must have their perimeter constant. Hence $\beta_{\tau}(\omega)$ is constant in $\tau$ for any rational rotation number $\omega$. Complete proofs for domains with $\mathscr C^{\infty}$-smooth boundary can be found in \cite[Theorem 2.12]{FKS} and in \cite[Corollary 3.2.3]{Siburg}. Below we describe how to adapt the proof to domains with $\mathscr C^3$-smooth boundary.

\begin{proof}
Sard's Lemma states that the set of critical values of a $\mathscr C^r$ real-valued function 
on a $d$-dimensional manifold has measure zero provided that $r \geq d$.  We need to apply  Sard's Lemma to the functionals $\mathcal{L}_q$ describing the lengths of polygons with $q$ sides, $q\geq 3$. Hence, there is no problem when  $r = \infty$, being these functionals $\mathscr C ^\infty$. 
However, if $r < \infty$, one should apply Sard's lemma in a more ingeneous way.
For any $q$, let $\Theta_q$ be the map that sends 
$(s_0, s_1)$ to the $q$-tuple $(s_0, s_1, \cdots, s_{q-1})$ obtained by collecting the next 
$q - 2$ collision points of the billiard trajectory passing through the initial pair 
$(s_0, s_1)$. Notice that the map $\Theta_q$ is as smooth as the billiard map, hence it is 
$\mathscr C^{r-1}$. One then considers
\[
\mathcal{L}^*_q(s_0, s_1) = \mathcal{L}_q(\Theta_q(s_0, s_1)).
\]
and observe that the set of critical values of $\mathcal{L}_q$ is the same as the set of critical values 
of $\mathcal{L}^*_q$; the latter is a real-valued $\mathscr C^{r-1}$ function of a 2-dimensional 
manifold; hence the result follows provided that $r \geq 3$.
\end{proof}

A \textit{rotational invariant curve} of rotation number $\omega\in\RR$ of a strongly convex planar domain $\Omega$ is a smooth non-contractible embedding 
\[
\SS^1\xhookrightarrow{} M
\]
such that $M\setminus\SS^1$ has two connected components and for which there is a parametrization
\[
\Gamma:\SS^1\longrightarrow M
\]
as smooth as the curve and such that
\begin{equation}
\label{equation:conj_inv_curve}
f\left(\Gamma(\theta)\right) = \Gamma(\theta+\omega)
\end{equation}
for any $\theta\in\SS^1$. If \eqref{equation:conj_inv_curve} is satisfied, we will say that $\Gamma$ conjugates $f$ to a rotation.

\begin{proposition}
\label{proposition:expression_beta_inv_curve}
Assume that the domain $\Omega$ whose boundary is parametrized by $\gamma$ has a rotational invariant curve of irrational rotation number $\omega\in(0,1/2]$ parametrized by
\[
\Gamma:\theta\in\SS^1\mapsto(s(\theta),\varphi(\theta))\in M,
\]
which conjugates $f$ to a rotation.

Then $\beta(\omega)$ can be expressed as
\begin{equation}
\label{equation:beta_inv_curve}
\beta(\omega) = -\int_0^1|\gamma(s(\theta+\omega))-\gamma(s(\theta))|d\theta.
\end{equation}
\end{proposition}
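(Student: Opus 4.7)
The plan is to realize the orbits on the invariant curve $\Gamma$ as minimizing orbits of rotation number $\omega$, and then to rewrite the Cesàro limit in Mather's definition of $\beta(\omega)$ as a spatial average against the Lebesgue measure on $\SS^1$ by unique ergodicity of the irrational rotation $\theta\mapsto\theta+\omega$.

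Concretely, I would proceed as follows. Fix a point $\theta_0\in\SS^1$ and set $\theta_k=\theta_0+k\omega$ and $s_k=s(\theta_k)$. Because $\Gamma$ conjugates the billiard dynamics on the curve to the rigid rotation of angle $\omega$, the sequence $(s_k)_{k\in\ZZ}$ is the impact sequence of an actual billiard orbit carried by $\Gamma$. Since $\omega$ is irrational and $\Gamma$ is a rotational invariant curve, a classical theorem of Aubry–Mather theory guarantees that every such orbit is action-minimizing; I would just invoke this fact (referring for instance to the same sources \cite{Gole,Siburg,FKS} already used in the excerpt). Thus $(s_k)_k$ is an admissible minimizing sequence in the definition of $\beta(\omega)$, which lets us substitute it directly:
\[
\beta(\omega)=-\lim_{N\to+\infty}\frac{1}{2N}\sum_{k=-N}^{N-1}\bigl|\gamma(s_{k+1})-\gamma(s_k)\bigr|.
\]

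Next, I would exhibit the summands as Birkhoff sums of the continuous function
\[
F:\SS^1\to\RR,\qquad F(\theta)=\bigl|\gamma(s(\theta+\omega))-\gamma(s(\theta))\bigr|,
\]
since $|\gamma(s_{k+1})-\gamma(s_k)|=F(\theta_k)=F(\theta_0+k\omega)$. The irrational rotation $R_\omega:\theta\mapsto\theta+\omega$ on $\SS^1$ is uniquely ergodic with respect to Lebesgue measure, so by the Weyl/Kronecker equidistribution theorem (equivalently the uniform ergodic theorem for uniquely ergodic systems) the Birkhoff averages of the continuous function $F$ converge uniformly in $\theta_0$ to $\int_0^1 F\,d\theta$. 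Combining this with the previous display yields exactly the claimed identity \eqref{equation:beta_inv_curve}.

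The only mildly delicate point is justifying that orbits on $\Gamma$ are minimizing in Mather's sense, so that they may legitimately be used to compute $\beta(\omega)$; the rest of the argument is the standard replacement of a time average by a space average, made automatic by continuity of $F$ and unique ergodicity of $R_\omega$. Regularity of $s(\cdot)$ is not needed beyond continuity, and the formula holds for every initial phase $\theta_0$, as the space average on the right does not depend on it.
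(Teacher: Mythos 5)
Your proposal is correct and follows essentially the same route as the paper: first invoke the Aubry--Mather fact that orbits on a rotational invariant curve are action-minimizing, so they can be used to compute $\beta(\omega)$, and then convert the time average into the space average $\int_0^1 F\,d\theta$ via equidistribution of the irrational rotation (the paper calls this step ``Birkhoff's averaging theorem''). Your appeal to unique ergodicity to get convergence for every initial phase $\theta_0$, not just almost every one, is a slightly more careful phrasing of the same step.
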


\begin{proof}
Orbits on an invariant curve are action minimizing, see for example \cite{FierobeSorrentino, ForniMather} hence 
\[
\beta(\omega) = -\lim_{N\to+\infty} \frac{1}{2N}\sum_{k=-N}^{N-1} |\gamma(s(\theta+\omega+k\omega)) - \gamma(s(\theta+k\omega))|.
\]
and the result follows from Birkhoff's ergodic theorem.
\end{proof}

\subsection{Deformation map}

Let $(\Omega_{\tau})_{\tau\in I}$ be a $\mathscr C^1$-smooth one-parameter family of domains. For any given $\tau\in I$, consider an $|\partial\Omega_{\tau}|$-periodic map $\gamma_{\tau}:\RR\to\RR^2$ parametrizing the boundary of $\partial\Omega_{\tau}$ by arc-length.

\begin{definition}
The \textit{deformation map} associated to $(\Omega_{\tau})_{\tau\in I}$ is the family of maps $(n_{\tau})_{\tau\in I}$ defined for any $\tau\in I$ and $s\in\RR$ by
\begin{equation} \label{defntau}
n_{\tau}(s) = \langle\partial_{\tau}\gamma_{\tau}(s)\,,\,N_{\gamma_{\tau}}(s)\rangle,
\end{equation}
where $\langle\,,\,\rangle$ denotes the canonical scalar product on $\RR^2$ and $N_{\gamma_{\tau}}(s)$ is the outgoing unit normal vector to $\partial\Omega_{\tau}$ at the point $\gamma_{\tau}(s)$. 
\end{definition}

\begin{remark}
\label{remark:symmetry_n}
The maps $n_{\tau}$ reflect the symmetries of a deformation:
\begin{itemize}
\item If the domains $\Omega_{\tau}$ are symmetric with respect to an axis independant of $\tau$ and $s=0$ corresponds to a point on that axis, then 
\[
n_{\tau}(-s)=n_{\tau}(s),\qquad s\in\RR,\tau\in I.
\]
\item If moreover each domain is centrally-symmetric, $n$ is $\frac{1}{2}|\partial \Omega_{\tau}|$-periodic:
\[
n_{\tau}\left(s+\frac{1}{2}|\partial \Omega_{\tau}|\right)=n_{\tau}(s),\qquad s\in\RR,\tau\in I.
\]
\end{itemize}
In what follows, given a deformation of dihedrally symmetric domains, we will assume that all domains in the deformation share the same symmetry axis and the same center of symmetry: this can be done without loss of generality as translations and rotations do not affect the length spectrum. We will therefore consider parametrizations by arc-length $s$ of the domains' boundaries such that the parameter $s=0$ corresponds to a point on the common axis of symmetry.
\end{remark}

A family of deformation maps characterizes trivial deformations in the following sense.

\begin{proposition}
\label{proposition:zero_deformation}
Assume that $n_{\tau}$ vanishes identically in $\tau$. Then $\Omega_{\tau}=\Omega_0$ for any $\tau\in I$.
\end{proposition}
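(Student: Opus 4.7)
The plan is to show that $n_\tau\equiv 0$ forces the deformation to be purely tangential to the boundary, and then that a suitable reparametrization absorbs it. Since $\{\gamma_\tau'(s),N_{\gamma_\tau}(s)\}$ is an orthonormal frame of $\RR^2$ along $\partial\Omega_\tau$, the assumption $n_\tau(s)=\langle\partial_\tau\gamma_\tau(s),N_{\gamma_\tau}(s)\rangle=0$ means exactly that there is a smooth scalar function $\lambda_\tau(s)$ with
\[
\partial_\tau\gamma_\tau(s)=\lambda_\tau(s)\,\gamma_\tau'(s).
\]

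Next, I would exploit the arc-length parametrization $|\gamma_\tau'|^2=1$ to constrain $\lambda_\tau$. Differentiating this identity in $\tau$ and swapping $\partial_\tau$ with $\partial_s$ gives $\langle\partial_s(\lambda_\tau\gamma_\tau'),\gamma_\tau'\rangle=0$. Expanding and using $\gamma_\tau''\perp\gamma_\tau'$ (also a consequence of arc-length) reduces this to $\partial_s\lambda_\tau=0$, so $\lambda_\tau=\lambda(\tau)$ depends only on the deformation parameter.

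Finally, I would cancel the resulting tangential drift by a shift of parameter. Setting $\Lambda(\tau)=\int_0^\tau\lambda(t)\,dt$ and $\tilde\gamma_\tau(s)=\gamma_\tau(s-\Lambda(\tau))$, the chain rule yields
\[
\partial_\tau\tilde\gamma_\tau(s)=\lambda(\tau)\gamma_\tau'(s-\Lambda(\tau))-\lambda(\tau)\gamma_\tau'(s-\Lambda(\tau))=0,
\]
so $\tilde\gamma_\tau(s)=\gamma_0(s)$ for every $\tau\in I$ and $s\in\RR$. In particular the images agree, $\gamma_\tau(\RR)=\gamma_0(\RR)$, i.e., $\partial\Omega_\tau=\partial\Omega_0$ as Jordan curves, and since each $\Omega_\tau$ is determined as the bounded component of the complement of its boundary, we conclude $\Omega_\tau=\Omega_0$.

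The proof is essentially a clean frame-and-ODE computation, so I do not expect a substantive obstacle. The only mild point of care is that the period $\mathcal L_\tau$ of $\gamma_\tau$ may vary with $\tau$; this causes no issue because we work with $\gamma_\tau$ as a map on all of $\RR$, and at the end one only needs that two (possibly differently parametrized) smooth closed convex curves with the same image bound the same region.
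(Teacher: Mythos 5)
Your argument is correct, and it reaches the conclusion by a more explicit route than the paper. The paper (quoting \cite{DKW}) stops essentially at your first step: from $n_\tau\equiv 0$ it deduces that $\partial_\tau\gamma_\tau(s)$ and $\gamma_\tau'(s)$ are everywhere linearly dependent, so the two-variable map $(\tau,s)\mapsto\gamma_\tau(s)$ has rank $1$ everywhere, and it leaves implicit the (constant-rank-theorem plus connectedness) step showing that the image is then a single curve. You instead solve the tangential drift explicitly: you identify $\partial_\tau\gamma_\tau=\lambda_\tau\gamma_\tau'$, use the arc-length constraint to show $\partial_s\lambda_\tau=0$, and then absorb $\lambda$ by the reparametrization $s\mapsto s-\Lambda(\tau)$, obtaining the stronger statement that $\gamma_\tau(\cdot-\Lambda(\tau))$ is literally independent of $\tau$ (which also gives $|\partial\Omega_\tau|=|\partial\Omega_0|$ for free). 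The trade-off is regularity: your computation swaps $\partial_\tau$ and $\partial_s$ and differentiates $|\gamma_\tau'|^2=1$ in $\tau$, which requires the joint map $(\tau,s)\mapsto\gamma_\tau(s)$ to be $\mathscr C^2$ (or at least to have continuous mixed second partials), whereas the paper states the proposition for $\mathscr C^1$-smooth families. In the context of the main theorem, where $r\geq 8$, this costs nothing, but it is worth flagging if you want the proposition at the stated generality.
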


\begin{proof}
The proof can be found in \cite[Lemma 3.3]{DKW}: if $n_{\tau}=0$ for any $\tau\in I$, this means that the vectors $\partial_{\tau}\gamma_{\tau}(s)$ and $\gamma_{\tau}'(s)$ are linearly dependent for any $s$. Since $\gamma(\tau,\cdot)$ is assumed to be a diffeomorphism, we conclude that $d\gamma(\tau, s)$ has rank
$1$ everywhere. It follows from the Constant Rank Theorem that the
image of $\gamma$ is a manifold of dimension $1$ which can be parameterized by
$\gamma(0,\cdot)$. This implies that $\Omega_\tau = \Omega_0$ for any $|\tau| \leq 1$.

\end{proof}

\subsection{$\tau$-variation of Mather's $\beta$-function}
\label{subsection:tau_variation_beta}

Let $(\Omega_{\tau})_{\tau\in I}$ be a one-parameter family of Birkhoff tables, associated to a deformation map $n_{\tau}(s)$ which is $|\partial\Omega_{\tau}|$-periodic in $s$. Denote by $\beta_{\tau}(\omega)$ Mather's $\beta$-function associated to each billiard map.

Consider an irrational number $\omega\in (0,1/2)$ and assume that for each $\tau\in I$ the billiard map associated to the table $\Omega_{\tau}$ has a rotational invariant curve $\Gamma_{\tau}$ of rotation number $\omega$, 
on which the dynamics is cojugated to a rotation, with conjugation given by: 
\[
\Gamma_{\tau}:\theta\in\SS^1\mapsto (s_{\tau}(\theta),\varphi_{\tau}(\theta)).
\]

 \medskip
 
\begin{proposition}
\label{proposition:first_var_beta}
Assume that each domain $\Omega_{\tau}$ has a rotational invariant curve parametrized by $\Gamma_{\tau}$ of irrational rotation number $\omega$ such that the map $(\tau,\theta)\mapsto\Gamma_{\tau}(\theta)$ is $\mathscr C^1$-smooth and conjugates the billiard map in $\Omega_{\tau}$ to a rotation. Then the map $\tau\mapsto\beta_{\tau}(\omega)$ is differentiable and its $\tau$-derivative is given by
\begin{equation}
\label{equation:beta_form1}
\partial_{\tau}\beta_{\tau}(\omega) = 2\int_0^{1}n_{\tau}(s_{\tau}(\theta))\sin\varphi_{\tau}(\theta)d\theta.
\end{equation}
\end{proposition}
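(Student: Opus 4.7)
My starting point is the closed-form expression \eqref{equation:beta_inv_curve} from Proposition \ref{proposition:expression_beta_inv_curve}:
\[
\beta_{\tau}(\omega) = -\int_0^1 L_{\tau}(\theta)\,d\theta,\qquad L_{\tau}(\theta) := |\gamma_{\tau}(s_{\tau}(\theta+\omega))-\gamma_{\tau}(s_{\tau}(\theta))|.
\]
Since $(\tau,\theta)\mapsto\Gamma_{\tau}(\theta)$ is assumed $\mathscr C^1$-smooth and $(\tau,s)\mapsto\gamma_{\tau}(s)$ is at least $\mathscr C^1$-smooth, the integrand $L_{\tau}(\theta)$ is $\mathscr C^1$ jointly in $(\tau,\theta)$, so I can differentiate under the integral sign. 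The proof therefore reduces to computing $\partial_{\tau}L_{\tau}(\theta)$ and simplifying its integral.

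The key observation is that $L_{\tau}(\theta)$ depends on $\tau$ in three distinct ways: explicitly through $\gamma_{\tau}$, and implicitly through the two arc-length coordinates $s_{\tau}(\theta)$ and $s_{\tau}(\theta+\omega)$. Using the well-known generating-function identities $\partial_s|\gamma(s')-\gamma(s)|=-\cos\varphi$ and $\partial_{s'}|\gamma(s')-\gamma(s)|=\cos\varphi'$ (where $\varphi,\varphi'$ are the billiard angles at the two endpoints of the chord, the latter identity using the reflection law along the invariant curve), the implicit contributions to $\partial_{\tau}L_{\tau}(\theta)$ take the form $-\cos\varphi_{\tau}(\theta)\,\partial_{\tau}s_{\tau}(\theta)+\cos\varphi_{\tau}(\theta+\omega)\,\partial_{\tau}s_{\tau}(\theta+\omega)$. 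After integration over $\theta\in[0,1]$, the translation $\theta\mapsto \theta-\omega$ combined with $1$-periodicity of the integrand (which follows from $\Gamma_{\tau}:\SS^1\to M$) makes these two contributions cancel exactly.

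It remains to treat the explicit contribution, which expands as $\langle u_{\theta},\partial_{\tau}\gamma_{\tau}(s_{\tau}(\theta+\omega))-\partial_{\tau}\gamma_{\tau}(s_{\tau}(\theta))\rangle$, where $u_{\theta}$ is the unit chord vector. I decompose $u_{\theta}$ into tangential and normal components at each endpoint, exploiting its geometric meaning: at $\gamma_{\tau}(s_{\tau}(\theta))$ it is the outgoing billiard direction and at $\gamma_{\tau}(s_{\tau}(\theta+\omega))$ it is the incoming one, by the reflection law. The resulting normal components are precisely $\pm\sin\varphi_{\tau}$ times $N_{\gamma_{\tau}}$, so their pairing with $\partial_{\tau}\gamma_{\tau}$ yields $\pm\sin\varphi_{\tau}\cdot n_{\tau}$. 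The tangential contributions cancel once more by the same periodicity-plus-shift argument, while the two normal contributions add (rather than cancel) because the reflection law reverses the sign of the normal component. By $1$-periodicity, the two surviving terms collapse to a single integral, producing $2\int_0^1 n_{\tau}(s_{\tau}(\theta))\sin\varphi_{\tau}(\theta)\,d\theta$, which combined with the leading minus sign from $\beta_{\tau}=-\int L_{\tau}\,d\theta$ gives the claimed formula (up to sign convention on the outward normal).

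The main obstacle here is purely bookkeeping: one must carefully track the sign conventions for the outward normal and for the incoming versus outgoing angles, so that the tangential terms indeed cancel between the two endpoints while the normal terms add. All cancellations themselves reduce to the elementary identity $\int_0^1 f(\theta+\omega)\,d\theta=\int_0^1 f(\theta)\,d\theta$ for $1$-periodic continuous $f$, so no further analytic input is needed beyond the $\mathscr C^1$ regularity assumed in the statement.
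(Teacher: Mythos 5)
Your proposal is correct and follows essentially the same route as the paper's proof: both differentiate the invariant-curve formula \eqref{equation:beta_inv_curve} under the integral, kill the contributions coming from the $\tau$-dependence of $s_{\tau}(\theta)$ by combining the shift $\theta\mapsto\theta+\omega$ with the reflection law (the paper phrases this as the pointwise Euler--Lagrange identity for the generating function), and extract the factor $2\sin\varphi_{\tau}\,n_{\tau}$ from the identity $u^{\tau}(\theta)-u^{\tau}(\theta-\omega)=-2\sin\varphi_{\tau}(\theta)N_{\gamma_{\tau}}(s_{\tau}(\theta))$. The only cosmetic difference is that you decompose the chord vector into tangential and normal parts at each endpoint separately, whereas the paper first shifts so that both endpoint contributions are paired with $\partial_{\tau}\gamma_{\tau}(s_{\tau}(\theta))$ and then applies the reflection law once.
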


\begin{proof}
According to \eqref{equation:beta_inv_curve}, $\beta_{\tau}(\omega)$ admits the following expression:
\[
\beta_{\tau}(\omega) = -\int_0^1|\gamma_{\tau}(s_{\tau,\omega}(\theta+\omega))-\gamma_{\tau}(s_{\tau,\omega}(\theta))|d\theta,
\]
where $s_{\tau,\omega}(\theta)$ is the $s$-projection of $\Gamma_{\tau}(\theta)$, and $\gamma_{\tau}$ is the corresponding parametrization of $\partial\Omega_{\tau}$ by $s$.

To ease the computations, we drop the index $\omega$ in $s_{\tau,\omega}$. Let us consider the familiy of generating maps $L_{\tau}$, namely the maps defined for any $\tau$ by
\[
L_{\tau}(s_0,s_1) := -|\gamma_{\tau}(s_1)-\gamma_{\tau}(s_0)|,\qquad s_0,s_1\in\RR.
\]
Then
\begin{equation}
\label{equation:expression_beta_proof}
\beta_{\tau}(\omega) = -\int_0^{1}L_{\tau}(\gamma_{\tau}(s_{\tau}(\theta)),\gamma_{\tau}(s_{\tau}(\theta+\omega)))d\theta
\end{equation}

Differentiating \eqref{equation:expression_beta_proof} with respect to $\tau$ we obtain the formula
\[
-\partial_{\tau}\beta_{\tau}(\omega) = A+B+C
\]
where
\[
A := \int_0^{1} \partial_{\tau} L_{\tau}(\gamma_{\tau}(s_{\tau}(\theta)),\gamma_{\tau}(s_{\tau}(\theta+\omega)))d\theta,
\]
\[
B := \int_0^{1}
\partial_{\tau}\gamma_{\tau}(s_{\tau}(\theta)) \partial_{s_0}L_{\tau}(\gamma_{\tau}(s_{\tau}(\theta)),\gamma_{\tau}(s_{\tau}(\theta+\omega)))d\theta,
\]
and 
\[
C := \int_0^{1}
\partial_{\tau}\gamma_{\tau}(s_{\tau}(\theta+\omega)) \partial_{s_1}L_{\tau}(\gamma_{\tau}(s_{\tau}(\theta)),\gamma_{\tau}(s_{\tau}(\theta+\omega)))d\theta.
\]
Applying the change of coordinates $\theta'=\theta+\omega$ in the integral defining $C$, we obtain
\begin{eqnarray*}
C+B &=& \int_0^{1}
\partial_{\tau}\gamma_{\tau}(s_{\tau}(\theta))
\left( 
\partial_{s_1}L_{\tau}(\gamma_{\tau}(s_{\tau}(\theta-\omega)),\gamma_{\tau}(s_{\tau}(\theta)))\right. \\
&& \left. \qquad + \;
\partial_{s_0}L_{\tau}(\gamma_{\tau}(s_{\tau}(\theta)),\gamma_{\tau}(s_{\tau}(\theta+\omega)))
\right)
d\theta. 
\end{eqnarray*}
Since $L_\tau$ is a generating function for the billiard dynamics, then
\[
\partial_{s_1}L_{\tau}(\gamma_{\tau}(s_{\tau}(\theta-\omega)),\gamma_{\tau}(s_{\tau}(\theta)))
+
\partial_{s_0}L_{\tau}(\gamma_{\tau}(s_{\tau}(\theta)),\gamma_{\tau}(s_{\tau}(\theta+\omega))) \equiv 0.
\]
Hence, $B+C=0$. 

Now, if we set $s_0 = s_{\tau}(\theta)$, $s_1 = s_{\tau}(\theta+\omega)$, then
\[
\partial_{\tau} L_{\tau}(s_0,s_1) = 
\langle \partial_{\tau}\gamma_{\tau}(s_0), u^{\tau}(\theta) \rangle
-\langle \partial_{\tau}\gamma_{\tau}(s_1), u^{\tau}(\theta) \rangle,
\]
where $u^{\tau}(\theta)$ is the unit vector joining $\gamma_{\tau}(s_0)$ to $\gamma_{\tau}(s_1)$. Applying again the change of coordinates $\theta'=\theta+\omega$ while integrating the second term of $\partial_{\tau} L_{\tau}(s_{\tau}(\theta),s_{\tau}(\theta+\omega))$, we obtain
\[
\partial_{\tau}\beta_{\tau}(\omega) = \int_{0}^{1} 
\langle \partial_{\tau}\gamma_{\tau}(s_{\tau}(\theta)), u^{\tau}(\theta)-u^{\tau}(\theta-\omega) \rangle d\theta.
\]
Because of the billiard reflexion law off the boundary $\partial\Omega_{\tau}$ at the point $\gamma_{\tau}(s_{\tau}(\theta))$, 
\[
u^{\tau}(\theta)-u^{\tau}(\theta-\omega) = -2\sin\varphi_{\tau}(\theta)N_{\gamma_{\tau}}(s_{\tau}(\theta))
\]
and the claim then follows by using the definition of $n_\tau$ in \eqref{defntau}.
\end{proof}

\medskip
For each $\tau\in I$ suppose that we are given a diffeomorphism
\[
L_{\tau}:x\in[0,1]\to s\in[0,|\partial\Omega_{\tau}|].
\]
We write $n_{\tau}(x)$ for any $x\in[0,1]$ to refer to $n_{\tau}\circ L_{\tau}(x)$. Similarly, $\varphi_{\tau}(x)$ refers to the angle $\varphi$ such that the pair $(s=L_{\tau}(x),\varphi)$ belong to the invariant curve parametrized by $\Gamma_{\tau}$. The latter notations are justified by the fact that $\Gamma_{\tau}(\SS^1)$ is a graph over the $s$-coordinate, as it follows from Mather's graph theorem  and the fact that all orbits on it are minimizing (see \cite{ForniMather}).\\

We will assume that $L_{\tau}$ preserves the symmetries of the domains, see Remark \ref{remark:symmetry_n}: if $\Omega_{\tau}$ is axis-symmetric, then we assume that $L_{\tau}(-x)=-L_{\tau}(x)$; if moreover $\Omega_{\tau}$ is centrally-symmetric, then $L_{\tau}\left(x+\frac{1}{2}\right)=L_{\tau}(x)+\frac{1}{2}|\partial\Omega_{\tau}|$.\\

\begin{corollary}
\label{corollary:first_var_beta_density}
Given a collection of diffeomorphisms $L_{\tau}:x\in[0,1]\to s\in[0,|\partial\Omega_{\tau}|]$, the $\tau$-derivative of $\beta_{\tau}(\omega)$ is given by
\begin{equation}
\label{equation:beta_form2}
\partial_{\tau}\beta_{\tau}(\omega) = \int_0^{1}n(x)\mu_{\Omega_{\tau}}(\omega,x)dx,
\end{equation}
where $\mu_{\Omega_{\tau}}(\omega,x)$ is given by the expression
\[
\mu_{\Omega_{\tau}}(\omega,x) = 2\sin\varphi_{\tau}(x)\theta_{\tau}'(x)
\]
in which $\theta_{\tau}$ is the change of coordinate from $x$ to $\theta$, and can be expressed as $\theta_{\tau}(x) = s_{\tau}^{-1}\circ L_{\tau}(x)$.
\end{corollary}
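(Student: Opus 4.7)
The plan is to derive the corollary directly from Proposition \ref{proposition:first_var_beta} by performing the change of variable $\theta = \theta_\tau(x)$ in the integral \eqref{equation:beta_form1}. There is no genuinely new content beyond keeping track of which parametrization is being used where; the only point that requires a brief justification is that $\theta_\tau$ is a well-defined diffeomorphism.

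First I would check that $\theta_\tau = s_\tau^{-1}\circ L_\tau$ makes sense as a $\mathscr{C}^1$-diffeomorphism of $\SS^1 \cong \RR/\ZZ$ to itself (up to a shift that can be absorbed by fixing origins). By the classical Birkhoff theorem for twist maps, see \cite{ForniMather}, the rotational invariant curve $\Gamma_\tau(\SS^1)\subset M$ is a graph over the $s$-coordinate, so the projection $\theta\mapsto s_\tau(\theta)$ is a smooth, orientation-preserving diffeomorphism from $\SS^1$ onto $\RR/|\partial\Omega_\tau|\ZZ$. Composing with the diffeomorphism $L_\tau$ given by hypothesis yields $\theta_\tau$, with positive derivative $\theta_\tau'(x) > 0$, and degree one, so the substitution $\theta = \theta_\tau(x)$, $d\theta = \theta_\tau'(x)\,dx$ preserves the $[0,1]$-integration range modulo the period.

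With the change of variable in hand, I would apply it to \eqref{equation:beta_form1}. By the very definition of $\theta_\tau$ we have $s_\tau(\theta_\tau(x)) = L_\tau(x)$, hence
\[
n_\tau(s_\tau(\theta_\tau(x))) = n_\tau(L_\tau(x)),
\]
which, under the notational convention introduced just before the statement of the corollary, is written simply as $n_\tau(x)$; similarly $\varphi_\tau(\theta_\tau(x))$ is by definition the angle $\varphi$ such that $(L_\tau(x),\varphi)\in \Gamma_\tau(\SS^1)$, denoted $\varphi_\tau(x)$ in the same convention. Substituting these identities into \eqref{equation:beta_form1} yields
\[
\partial_\tau \beta_\tau(\omega) = 2\int_0^1 n_\tau(x)\sin\varphi_\tau(x)\,\theta_\tau'(x)\,dx,
\]
and packaging the factor $2\sin\varphi_\tau(x)\,\theta_\tau'(x)$ as $\mu_{\Omega_\tau}(\omega,x)$ gives exactly \eqref{equation:beta_form2}. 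The main thing to be careful about is simply the bookkeeping of the three parametrizations $x$, $s$, and $\theta$; no estimate or analytic difficulty arises.
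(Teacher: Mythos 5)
Your proof is correct and matches the paper's (implicit) argument: the corollary is stated without proof precisely because it is the one-line change of variables $\theta=\theta_\tau(x)$ in \eqref{equation:beta_form1}, with the graph property of rotational invariant curves justifying that $\theta_\tau$ is a diffeomorphism, exactly as you spell out.
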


\medskip

In this paper, we will consider $x$ to be the {\it Lazutkin coordinate} on $\partial\Omega_{\tau}$. It was introduced by Lazutkin \cite{Lazutkin} as part of a set of coordinates $(x,y)$ on the phase space defined explicitely by
\begin{equation}
\label{equation:Lazutkin_change}
x = L^{-1}_{\tau}(s) = C_{\tau}\int_0^s \varrho_{\tau}^{-2/3}(s')ds'
\qquad\text{and}\qquad 
y=4C\varrho_{\tau}^{1/3}(s)\sin\left(\frac{\varphi}{2}\right).
\end{equation}
where $\varrho_{\tau}(s)$ denotes  the radius of curvature of $\partial\Omega_{\tau}$ at the point $\gamma_\tau(s)$, while $C_{\tau}^{-1} := \int_0^{|\partial\Omega_{\tau}|} \varrho_{\tau}^{-2/3}(s)ds$ is a normalization constant.\\

\begin{definition}
\label{definition:KAM_density}
We call the map $\mu_{\Omega_\tau}(\omega,\cdot)$ the \textit{KAM density} of the domain $\Omega_\tau$. 
\end{definition}

\subsection{KAM properties of a billiard table}

In the above-defined Lazutkin's coordinates $(x,y)$ defined by \eqref{equation:Lazutkin_change},
the billiard  $f(x,y)=(x_1,y_1)$ becomes
\[
\left\{
\begin{array}{ccl}
x_1 & = & x+y+\mathscr O(y^3)\\
y_1 & = & y +\mathscr O(y^4).
\end{array}\right.
\]
This expansion allows one to interpret $f$ as a perturbation for small $y$ of the integrable standard map whose phase space is foliated by invariant curves. As a consequence of KAM theorem, rotational invariant curves corresponding to small Diophantine numbers exist in regions of the phase space nearby $\{y=0\}$. More precisely, they exist in abundance, namely for a large (in measure sense) family of rotation number.
Let us first recall these results.

Given $(\nu,\sigma)\in(0,1)\times(\frac{5}{2},+\infty)$, we define the set of \textit{$(\nu,\sigma)$-Diophantine numbers} by
\[
\mathcal D(\nu,\sigma) := \{ \omega\in (0,1/2) \,|\, \forall (m,n)\in\ZZ\times\ZZ_{>0}
\quad |n\omega-m|\geq \nu |m| n^{-\sigma}\}.
\]

Let $\Omega$ be a Birkhoff table with $\mathscr C^r$-smooth boundary and $f:M\to M$ the associated billiard map inside $\Omega$. 

Lazutkin \cite{Lazutkin} showed that every $\mathscr C^r$-smooth Birkhoff table $\Omega$ with $r\geq 6$ sufficiently large admits smooth KAM curve $\Gamma$ for a positive measure set of Diophantine rotation numbers $\omega$ that accumulates to $0$. In particular, these curves will be $\mathscr C^\ell$, where $\ell=\ell(r, \omega)$. 

More precisely, given a fixed pair $(\nu,\sigma)$ there is $\delta = \delta(r, \nu,\sigma)>0$ such that $\Omega$ admits smooth KAM curves $\Gamma$ of any rotation number $\omega\in \mathcal D(\nu,\sigma)\cap(0,\delta)$, whose regularity $\ell=\ell(r)$ can be chosen uniformly for Diophantine $\omega$ of the same type $(\nu,\sigma)$. Moreover, $\ell(r)\to+\infty$ as $r\to+\infty$.

%
%
%

Moreover it follows from \cite{Douady} that  if two $\mathscr C^r$-smooth Birkhoff tables $\Omega$ and $\Omega'$ are sufficiently close, with respect to the $\mathscr C^r$-norm, then their associated billiard maps admits KAM curves corresponding to rotation numbers in  a common positive measure set of Diophantine numbers that accumulate to $0$. 

\begin{definition}
\label{definition:KAM_family_curves}
Let $\Omega$ be a $\mathscr C^r$-smooth billiard domain whose billiard map is given by $f$. A \textit{family of KAM invariant curve of type $(\nu,\sigma)$} is a map
\[
\Gamma: E\times\SS^1\to M
\]
where $E$ is a set of the form $\mathcal D(\nu,\sigma)\cap(0,\delta)$ for a $\delta>0$ and such that for any $\omega\in E$ the map $\Gamma(\omega,\cdot)$ is a non contractible embedding $\SS^1\longrightarrow M$ which satisfies
\[
f\left(\Gamma(\omega,\theta)\right)=\Gamma(\omega,\theta+\omega),\qquad\theta\in\SS^1.
\]

It is said to be $\mathscr C^{\ell}$-smooth for a certain integer $\ell>0$ if $\Gamma$ is $\mathscr C^{\ell}$-smooth in  Whitney sense.
\end{definition}

\begin{theorem}[\cite{Douady, Lazutkin}, \cite{Poeschel1,Poeschel2}]
\label{theorem:KAM_general}
Let $(\nu,\sigma)\in(0,1)\times(5/2,+\infty)$ and $r>0$. There exist $\delta=\delta(r,\nu,\sigma)>0$ and $\ell=\ell(r,\nu,\sigma)>0$ such that:
\begin{enumerate}
\item any strongly convex planar domain  $\Omega$ with $\mathscr C^r$-smooth boundary has a $\mathscr C^{\ell}$-smooth family of KAM curves $\Gamma$ of type $(\nu,\sigma)$;
\item if two such domains $\Omega$ and $\Omega'$ are sufficiently $\mathscr C^r$-close, they admit a family of KAM curves $\Gamma$ and $\Gamma'$ of type $(\nu,\sigma)$ defined on the same set $\mathcal D(\nu,\sigma)\cap(0,\delta)$;
\item $\Gamma$ and $\Gamma'$ can be taken sufficiently $\mathscr C^{\ell}$-close from eachother.
\end{enumerate}
Moreover, the integer $\ell(r,\nu,\sigma)$ is such that $\ell(r,\nu,\sigma)\to+\infty$ as $r\to+\infty$ and the pair $(\nu,\sigma)$ is fixed.
\end{theorem}

\begin{definition}
Given $\delta>0$, a \textit{KAM curve of type $(\nu,\sigma)$ in a $\delta$-neighborhood of the boundary} of $\Omega$ is a map
\[
\Gamma:\mathcal D(\nu,\sigma)\cap[0,\delta) \times\SS^1 \to M
\]
such that $\Gamma(\omega,\cdot)$ parametrizes a rotational invariant curve of rotation number $\omega$ for any $\omega\in\mathcal D(\nu,\sigma)$  and conjugates $f$ to a rotation.
\end{definition}

Given an integer $\ell>0$, we say that a KAM curve $\Gamma:\mathcal D(\nu,\sigma)\cap[0,\delta) \times\SS^1 \to M$ is {$\mathscr C^{\ell}$-smooth} if it is the restriction of a $\mathscr C^{\ell}$ smooth maps defined on $[0,\delta) \times\SS^1$ -- this is known as Whitney smooth regularity. 

This induces a topology on the space of KAM curve of type $(\nu,\sigma)$ defined in a $\delta$-neighborhood of the boundary. However given two such curves $\Gamma,\Gamma'$ which are close from eachother, the $\ell$-jets in $\omega$ of $\Gamma$ and $\Gamma'$ at an $\omega \in \mathcal D(\nu,\sigma)\cap[0,\delta)$ are not necessarily close to eachother, unless $\omega$ is a \textit{density point} of $\mathcal D(\nu,\sigma)\cap[0,\delta)$.

\bigskip

\subsection{Isospectral orthogonality}
\label{subsection:isospectral_orthogonality}

Let $(\Omega_{\tau})_{\tau\in I}$ be a $\mathscr C^3$-smooth one-parameter family of Birkhoff tables obtained by a deformation map $n_{\tau}$. Assume that each domain $\Omega_{\tau}$ has a rotational invariant curve parametrized by $\Gamma_{\tau}$ of rotation number $\omega$ such that the map $(\tau,\theta)\mapsto\Gamma_{\tau}(\theta)$ is $\mathscr C^1$-smooth  and conjugates the billiard map inside $\Omega_{\tau}$ to a rotation.

\begin{proposition}
\label{proposition:othogonality_property}
Assume that the family $(\Omega_{\tau})_{\tau\in I}$ is isospectral. Then for any $\tau\in I$ the following orthogonality property holds:
\begin{equation}
\label{equation:isospectral_orthonality2}
\int_0^{1}n_{\tau}(x)\mu_{\Omega_{\tau}}(\omega,x)dx = 0.
\end{equation}
\end{proposition}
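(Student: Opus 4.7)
The statement is essentially the confluence of two facts already established in the excerpt, so my plan is to combine them directly rather than redo any of the underlying calculus.

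First, I would invoke Theorem \ref{theorem:beta_isospectral_domains}: since the deformation $(\Omega_\tau)_{\tau\in I}$ is $\mathscr C^3$-smooth and isospectral, the Mather $\beta$-functions satisfy $\beta_\tau(\omega) = \beta_0(\omega)$ for every $\tau\in I$ and every $\omega\in[0,1/2)$. In particular, fixing the irrational rotation number $\omega$ given in the hypothesis, the function $\tau\mapsto\beta_\tau(\omega)$ is constant on $I$, so its derivative vanishes identically:
\[
\partial_\tau\beta_\tau(\omega) = 0 \qquad \text{for all } \tau\in I.
\]

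Second, I would apply Corollary \ref{corollary:first_var_beta_density}. Its hypotheses are exactly what is assumed in the proposition: the $\mathscr C^3$-smooth deformation yields a well-defined deformation map $n_\tau$, and the existence of the invariant curves $\Gamma_\tau$ with $(\tau,\theta)\mapsto\Gamma_\tau(\theta)$ of class $\mathscr C^1$ allows one to introduce the Lazutkin diffeomorphisms $L_\tau$ (which depend smoothly on $\tau$ via \eqref{equation:Lazutkin_change}) and to form the KAM density $\mu_{\Omega_\tau}(\omega,x)$. The corollary then asserts
\[
\partial_\tau\beta_\tau(\omega) = \int_0^1 n_\tau(x)\,\mu_{\Omega_\tau}(\omega,x)\,dx.
\]

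Equating the two expressions for $\partial_\tau\beta_\tau(\omega)$ yields the desired orthogonality \eqref{equation:isospectral_orthonality2}. There is no real obstacle here; the only point meriting a line of comment is that the differentiability of $\tau\mapsto\beta_\tau(\omega)$ (needed to say that a constant function has zero derivative in the sense of Corollary \ref{corollary:first_var_beta_density}) is precisely what Proposition \ref{proposition:first_var_beta} supplies, so the two statements match hypothesis-to-hypothesis. Thus the proof reduces to the one-line chain
\[
0 \;=\; \partial_\tau\beta_\tau(\omega) \;=\; \int_0^1 n_\tau(x)\,\mu_{\Omega_\tau}(\omega,x)\,dx,
\]
and the proposition follows.
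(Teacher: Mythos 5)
Your argument is exactly the paper's: Theorem \ref{theorem:beta_isospectral_domains} gives that $\tau\mapsto\beta_\tau(\omega)$ is constant, and Corollary \ref{corollary:first_var_beta_density} identifies its derivative with the integral in \eqref{equation:isospectral_orthonality2}, forcing it to vanish. The proposal is correct and matches the paper's two-line proof, with the added (welcome) remark that the differentiability needed comes from Proposition \ref{proposition:first_var_beta}.
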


\begin{proof}
By Theorem \ref{theorem:beta_isospectral_domains}, $\tau\mapsto\beta_{\tau}(\omega)$ is constant. Hence its $\tau$ derivative (see \eqref{equation:beta_form2}) vanishes identically. 
\end{proof}

Theorem \ref{theorem:KAM_general} imply that if we are given a $\mathscr C^{r}$-smooth family of strongly convex planar billiards $(\Omega_{\tau})_{\tau\in I}$ then $\Omega_{\tau}$ has a $\mathscr C^{\ell}$-smooth KAM curve $\Gamma_{\tau}$ of type $(\nu,\sigma)$ for any $\tau\in I$ such that the map 
\[
(\tau,\omega,\theta)\mapsto \Gamma_{\tau}(\omega,\theta)
\]
is $\mathscr C^{\ell}$-smooth. Proposition \ref{proposition:othogonality_property} has the following consequence:

\begin{corollary}
\label{corollary:orthogonality_property}
Let 
$(\nu,\sigma)\in(0,1)\times(1,+\infty)$, 
$r,\ell>0$, $\delta,\varepsilon>0$ and $\Omega$ be as in Theorem \ref{theorem:KAM_general}. Let $(\Omega_{\tau})_{\tau\in I}$ be an isospectral $\mathscr C^r$-smooth deformation such that $\|\partial\Omega_{\tau}-\partial\Omega\|_{\mathscr C^r}\leq \varepsilon$ for any $\tau\in I$.
Then
\begin{equation}
\label{equation:isospectral_orthonality}
\int_0^{1}n_{\tau}(x)\mu_{\Omega_{\tau}}(\omega,x)dx = 0\qquad \forall\omega\in\mathcal D(\nu,\sigma),\,\,\forall\tau\in I.
\end{equation}
\end{corollary}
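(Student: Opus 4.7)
The plan is to reduce the corollary to an application of Proposition \ref{proposition:othogonality_property} at each fixed Diophantine rotation number $\omega \in \mathcal D(\nu,\sigma)$, after first producing a sufficiently smooth family of invariant curves of rotation number $\omega$ for the deformation $(\Omega_\tau)_{\tau \in I}$.

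First I would invoke Theorem \ref{theorem:KAM_general} with base domain $\Omega$. Since $\|\partial\Omega_{\tau}-\partial\Omega\|_{\mathscr C^r}\leq \varepsilon$ for every $\tau \in I$, each $\Omega_\tau$ inherits a $\mathscr C^{\ell}$-smooth KAM curve $\Gamma_\tau$ of type $(\nu,\sigma)$ defined in the same $\delta$-neighborhood of the boundary. As emphasized in the paragraph immediately preceding the statement of this corollary, applied parametrically to the $\mathscr C^r$-smooth deformation $(\Omega_\tau)_{\tau\in I}$ the theorem further provides joint $\mathscr C^{\ell}$-smoothness of the map
\[
(\tau,\omega,\theta) \in I \times \bigl(\mathcal D(\nu,\sigma)\cap[0,\delta)\bigr) \times \SS^1 \longmapsto \Gamma_\tau(\omega,\theta).
\]
Fixing any $\omega \in \mathcal D(\nu,\sigma)$, this yields in particular a $\mathscr C^1$-smooth family $\tau \mapsto \Gamma_\tau(\omega,\cdot)$ of rotational invariant curves of rotation number $\omega$, which is exactly the regularity required to apply Proposition \ref{proposition:othogonality_property}.

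With this setup in place, the conclusion is immediate. For each fixed $\omega \in \mathcal D(\nu,\sigma)$, Proposition \ref{proposition:othogonality_property} applied to the isospectral family $(\Omega_\tau)_{\tau \in I}$ equipped with the invariant curves $\Gamma_\tau(\omega,\cdot)$ produces the identity
\[
\int_0^{1} n_\tau(x)\,\mu_{\Omega_\tau}(\omega,x)\,dx = 0 \qquad \forall\,\tau \in I,
\]
and since $\omega$ was arbitrary in $\mathcal D(\nu,\sigma)$ this is precisely \eqref{equation:isospectral_orthonality}.

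The only real obstacle is the joint smoothness of the KAM curves in $(\tau,\omega,\theta)$ over the Cantor-like set $\mathcal D(\nu,\sigma)$ in the $\omega$-variable, since that is what actually legitimises the differentiation under the integral sign carried out in Proposition \ref{proposition:first_var_beta} at every Diophantine $\omega$. This does not follow from the pointwise version of Theorem \ref{theorem:KAM_general} alone but from its parametric refinement, namely Whitney-smooth dependence on external parameters; once that is granted, the remainder of the proof is a direct packaging of Proposition \ref{proposition:othogonality_property}.
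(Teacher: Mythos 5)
Your proposal is correct and follows the same route the paper intends: the paper derives the corollary precisely by combining Theorems \ref{theorem:KAM_billiards} and \ref{theorem:KAM_general} to obtain a jointly $\mathscr C^{\ell}$-smooth (in the Whitney sense) family $(\tau,\omega,\theta)\mapsto\Gamma_{\tau}(\omega,\theta)$ of invariant curves, and then applies Proposition \ref{proposition:othogonality_property} at each fixed Diophantine $\omega$. Your closing remark correctly identifies the one point requiring care, namely the parametric Whitney-smooth dependence over the Cantor set of rotation numbers, which is exactly what the paper's preceding paragraph supplies.
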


\medskip

\subsection{Periodic orbits of rotation number $1/q$}
\label{subsection:DKW_proof}
Let $(\Omega_{\tau})_{\tau\in I}$ be a one-parameter family of Birkhoff tables and assume that for any $\tau\in I$ the domain $\Omega_{\tau}$ is axis-symmetric. This case was studied in \cite{DKW}, and we recall some results that will be useful for our discussion.

For each $\tau\in I$, we can assume that $\Omega_{\tau}$ share the same symmetry axis as $\Omega_0$, and that a same point, called \textit{marked point}, independent of $\tau$ belong to this axis and to the boundary $\partial\Omega_{\tau}$: this is justified by eventually applying translations and rotations to the different domains in the family, since these transformations do not change their length-spectrum.

In terms of parametrization by arc-length, we can assume that $\gamma_{\tau}(0)$ is constant in $\tau$, equal to the marked point, and that for any $\tau\in I$ and any $s\in\RR$, the points $\gamma_{\tau}(s)$ and $\gamma_{\tau}(-s)$ are symmetric with respect to the previously fixed axis of symmetry.

Given such a symmetric domain $\Omega$, the lift of its billiard map $F:\RR\times(0,\pi)\to \RR\times(0,\pi)$ and an integer $q\geq 2$, we consider symmetric periodic orbits of rotation number $1/q$, namely a sequence $(s_k,\varphi_k)_{k\in\ZZ}$ where $F^k(s_0,\varphi_0)=(s_k,\varphi_k)$ for any $k\in\ZZ$ and such that for $k\in\ZZ$
\[
(s_{k+q},\varphi_{k+q}) = (s_k+1,\varphi_k),\quad
s_{-k} = -s_k.
\]

\medskip
\begin{proposition}[\cite{DKW}]
\label{proposition:isospectral_DKW}
Given an axis-symmetric domain $\Omega$, for each $q\geq 2$ the billiard map in $\Omega$ has a ``distinguished'' symmetric periodic orbit $(s_k^{(q)},\varphi_k^{(q)})_k$ of rotation number $1/q$ starting at $s_0^{(q)}=0$.  

Moreover if the deformation $(\Omega_{\tau})_{\tau\in I}$ is isospectral and $\mathscr C^9$-smooth then
\begin{equation} 
\ell_q(n_{\tau}) := \sum_{k=0}^{q-1}n_{\tau}(s_k^{(q,\tau)})\sin\varphi_k^{(q,\tau)} = 0,\quad\forall q\geq 2
\end{equation}
where $(s_k^{(q,\tau)},\varphi_k^{(q,\tau)})_k$ is the distinguished symmetric periodic orbit  of rotation number $1/q$ in $\Omega_{\tau}$.
\end{proposition}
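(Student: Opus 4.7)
My plan is to establish the distinguished orbit by a constrained variational argument and then to derive the isospectral identity $\ell_q(n_\tau)=0$ from a first-order envelope computation combined with the measure-zero property of the length spectrum.

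For existence, fix the axis of symmetry of $\Omega$ passing through the marked point $\gamma(0)$ and consider the perimeter functional
\[
P(s_0,\dots,s_{q-1}) = \sum_{k=0}^{q-1}\bigl|\gamma(s_{k+1})-\gamma(s_k)\bigr|,\qquad s_q := s_0 + |\partial\Omega|,
\]
on ordered configurations of $q$ boundary points. Restrict $P$ to the symmetric slice defined by $s_0 = 0$ and $s_{q-k}\equiv -s_k \mod |\partial\Omega|$, which is the fixed-point set of the $\ZZ/2$-action induced by the reflective symmetry of $\Omega$. The slice is precompact and $P$ attains a maximum in its interior. By Palais' principle of symmetric criticality, this maximum is automatically a critical point of the unrestricted $P$, hence a periodic billiard orbit of rotation number $1/q$ with $s_0^{(q)}=0$. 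This is the desired distinguished orbit.

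Next I would show that $\tau\mapsto L_q(\tau):=P_\tau(s_0^{(q,\tau)},\dots,s_{q-1}^{(q,\tau)})$ is $\mathscr C^1$ on $I$. Nondegeneracy of the symmetric maximum combined with the $\mathscr C^8$-regularity of the deformation gives, via the implicit function theorem, smooth persistence of the orbit $\tau\mapsto (s_k^{(q,\tau)},\varphi_k^{(q,\tau)})_k$. The envelope theorem applied at the critical point then yields
\[
\frac{d}{d\tau}L_q(\tau) = \sum_{k=0}^{q-1}\partial_\tau\bigl|\gamma_\tau(s_{k+1}^{(q,\tau)})-\gamma_\tau(s_k^{(q,\tau)})\bigr|
\]
with $(s_k^{(q,\tau)})$ held fixed, since the chain-rule contributions along $\partial_{s_k}P$ vanish by criticality. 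Reindexing the sum and using the reflection identity $\hat u_k - \hat u_{k-1} = -2\sin\varphi_k^{(q,\tau)}\,N_{\gamma_\tau}(s_k^{(q,\tau)})$ exactly as in the proof of Proposition \ref{proposition:first_var_beta}, this simplifies to
\[
\frac{d}{d\tau}L_q(\tau) = 2\sum_{k=0}^{q-1} n_\tau(s_k^{(q,\tau)})\sin\varphi_k^{(q,\tau)} = 2\,\ell_q(n_\tau).
\]

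To conclude, isospectrality gives $L_q(\tau)\in\mathscr L(\Omega_\tau)=\mathscr L(\Omega_0)$ for all $\tau\in I$. As in the sketch of Theorem \ref{theorem:beta_isospectral_domains}, Sard's theorem applied to the action functionals shows that $\mathscr L(\Omega_0)$ has Lebesgue measure zero. A $\mathscr C^1$ map into a null set must be constant: otherwise the inverse function theorem at a point with nonzero derivative would force the image to contain an open interval. Hence $\frac{d}{d\tau}L_q(\tau)\equiv 0$, and $\ell_q(n_\tau)=0$ for all $\tau\in I$ and $q\geq 2$. The main obstacle I foresee is establishing nondegeneracy of the constrained maximum uniformly enough to track the orbit through the entire deformation; this is what forces the rather strong $\mathscr C^8$-hypothesis, since one needs a margin of derivatives both for the implicit function theorem step and for the measure-zero constancy argument.
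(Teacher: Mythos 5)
The paper does not actually prove this proposition; it cites \cite{DKW}, and your overall strategy (variational existence of the symmetric orbit on the reflection-invariant slice, constancy of its length via the measure-zero length spectrum, and an envelope computation producing $2\ell_q(n_\tau)$) is the same as the cited argument. Your reflection-law computation giving $\partial_\tau P_\tau = 2\sum_k n_\tau(s_k)\sin\varphi_k$ at a critical configuration is correct, including the factor $2$.

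There is, however, a genuine gap in the middle step: you derive differentiability of $\tau\mapsto L_q(\tau)$ and the validity of the chain-rule cancellation from ``nondegeneracy of the symmetric maximum'' plus the implicit function theorem, and you offer no argument for that nondegeneracy. It cannot be assumed here: for the ellipse itself the $q$-periodic orbits form one-parameter families of equal perimeter (Poncelet/Graves), so every such orbit is a degenerate critical point of the unrestricted perimeter functional, and nothing guarantees nondegeneracy of the restricted maximum either; since the whole paper works in a neighborhood of the ellipse, the IFT step fails precisely where the result is needed. The repair --- which is how \cite{DKW} proceed --- requires neither nondegeneracy nor uniqueness of the maximizer: $L_q(\tau)=\max P_\tau$ over the compact symmetric slice is continuous (indeed locally Lipschitz), its image is a connected subset of the closed null set $\mathscr L(\Omega_0)$, hence a single point, so $L_q$ is constant by connectedness alone (no $\mathscr C^1$ regularity of $L_q$ is needed); then for any fixed maximizer $x^*$ at $\tau_0$ the $\mathscr C^1$ function $\tau\mapsto P_\tau(x^*)-L_q(\tau)$ is $\le 0$ with equality at $\tau_0$, so its derivative vanishes there, which is exactly $\partial_\tau P_{\tau_0}(x^*)=0$ and hence $\ell_q(n_{\tau_0})=0$. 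Two smaller points: attainment of the maximum in the interior of the slice (all $q$ impact points distinct and cyclically ordered, so the rotation number is genuinely $1/q$) needs the standard Birkhoff collapsing argument, which you should at least cite; and the $\mathscr C^8$ hypothesis is not consumed by this proposition ($\mathscr C^2$ suffices for the argument above) --- it is required later for the asymptotic expansion of $\ell_q$ in Proposition \ref{proposition:asymptotics_DKW}, not for the IFT or the constancy step as you suggest.
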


Let $\Omega$ be an axis-symmetric domain with $\mathscr C^9$-smooth boundary. Note that if a deformation is isospectral, each domain has the same perimeter, and -- up to applying a rescaling --  we can suppose that the common perimeter is $1$. For any $q\geq 2$, consider its distinguished symmetric periodic orbit of rotation number $1/q$ from Proposition \ref{proposition:isospectral_DKW}; 
in the expression of these periodic orbits, we will use Lazutkin coordinate $x$ on the boundary instead of the arc-length parameter and denote by $(x_k^{(q)},\varphi_k^{(q)})_k$ these periodic orbits.

Given a map $n\in L^2(\SS^1)$, define
\[
\ell_q(n) = \sum_{k=0}^{q-1}n(x_k^{(q)})\sin\varphi_k^{(q)}.
\]

Fix an $\alpha\in(3,4)$ and recall that an even map $n\in L^2(\SS^1)$ can be decomposed in Fourier modes as
\[
n(x) = \sum_{j\geq 0}\widehat n_j \cos(2\pi jx).
\]
We introduce the space $H^{\alpha}$ of even maps $n\in L^2(\SS^1)$ such that $j^{\alpha}\widehat n_j$ converges to $0$ in $j$. We endow it with the norm $\|\cdot\|_{\alpha}$ defined by
\[
\|n\|_{\alpha} = \sup_{j\geq 0} j^{\alpha}|\widehat n_{j}|.
\]

\begin{remark}
Note that here the $\tfrac{1}{2}$-periodicity is not assumed.
\end{remark}

We now present a result on asymptotics estimates of $\ell_q(n)$, as $q\to+\infty$, which can be found in \cite{DKW}. To simplify the statement, we introduce the operator 
$\Delta:H^{\alpha}\to h^{\alpha}$ defined for $q\geq 0$ by
\[
\Delta(n)_q = \tfrac{1}{q}\sum_{k=0}^{q-1}n\left(\tfrac{k}{q}\right)-\widehat n_0.
\]
Analogously, we consider the space $h^{\alpha}$ of sequences $(u_q)_{q\geq 2}$ such that $q^{\alpha}u_q$ converges to $0$ in $q$ together  with the norm $\|\cdot\|_{\alpha}$ defined by $\|u\|_{\alpha} = \sum q^{\alpha}|u_q|$.

\begin{proposition}[\cite{DKW}]
\label{proposition:asymptotics_DKW}
Given $n\in H^{\alpha}$, there exist linear maps $\ell_0, \ell_{\bullet}:H^\alpha\to\RR$ such that the following expansion holds:
\begin{equation}
\label{equation:asymptotics_ell}
\ell_q(n) = \Delta(m_{\Omega}n)_q+\ell_0(m_{\Omega} n)+\frac{1}{q^2}\ell_{\bullet}(m_{\Omega} n) +\bigo{\frac{\|n\|_{\alpha}}{q^4}}
\end{equation}
where $m_{\Omega}$ is the map given by
\begin{equation}
\label{equation:asymptotics_ell_2}
m_{\Omega}(x) = \left(2C\varrho_{\Omega}(x)\right)^{-1}>0
\end{equation}
in which $C>0$ corresponds to the normalization constant appearing in~\eqref{equation:Lazutkin_change} and $\varrho_{\Omega}$ corresponds to the radius of curvature of $\partial\Omega$ expressed in terms of the Lazutkin coordinates $x$.
\end{proposition}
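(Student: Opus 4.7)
The strategy is to expand the orbit data $(x_k^{(q)},\sin\varphi_k^{(q)})$ asymptotically in powers of $1/q$ in Lazutkin coordinates, substitute into the definition of $\ell_q$, and collect terms by order. I would begin with the classical Lazutkin normal form: in the coordinates $x$ defined by \eqref{equation:Lazutkin_change}, the billiard map near the boundary is smoothly conjugate to a small perturbation of the integrable twist $(x,y)\mapsto(x+y,y)$, with $y$ related to $\sin\varphi$ via $y = 2C\varrho_\Omega(s)\sin\varphi + O(\sin^2\varphi)$, equivalently $\sin\varphi = m_\Omega(x)^{-1}\, y + O(y^2)$. Combined with the distinguished character of the orbit, namely $x_0^{(q)}=0$ together with the axial symmetry $x_{-k}^{(q)}=-x_k^{(q)}$ and $\varphi_{-k}^{(q)}=\varphi_k^{(q)}$, the orbit equations can be solved order by order to yield expansions
\[
x_k^{(q)} = \frac{k}{q} + \frac{1}{q^2}\,A_2\!\left(\tfrac{k}{q}\right) + \frac{1}{q^4}\,A_4\!\left(\tfrac{k}{q}\right) + O\!\left(\tfrac{1}{q^6}\right),
\]
\[
\sin\varphi_k^{(q)} = \frac{1}{q}\,m_\Omega(x_k^{(q)}) + \frac{1}{q^3}\,B\!\left(\tfrac{k}{q}\right) + O\!\left(\tfrac{1}{q^5}\right),
\]
where $A_2, A_4, B$ are explicit smooth functions of $\varrho_\Omega$ and all odd-power corrections vanish by the symmetry.

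Plugging these expansions into $\ell_q(n)=\sum_{k=0}^{q-1}n(x_k^{(q)})\sin\varphi_k^{(q)}$, the leading order gives the Riemann sum
\[
\frac{1}{q}\sum_{k=0}^{q-1}(m_\Omega n)\!\left(\tfrac{k}{q}\right) = \Delta(m_\Omega n)_q + \widehat{(m_\Omega n)}_0.
\]
Taylor expanding $n$ around $k/q$, exploiting the quadratic correction $q^{-2}A_2$ in positions together with the $q^{-3}$ correction to $\sin\varphi$, and integrating by parts in the resulting Riemann sums, one identifies the $q$-independent correction as a linear functional $\ell_0(m_\Omega n)$ (collecting $\widehat{(m_\Omega n)}_0$ together with the integrated contributions coming from $A_2$ and $B$), and the next order as $q^{-2}\ell_\bullet(m_\Omega n)$. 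The explicit formulas for $\ell_0$ and $\ell_\bullet$ in terms of integrals of $m_\Omega n$ and its derivatives against weights built from $\varrho_\Omega, A_2, A_4, B$ arise from this bookkeeping.

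The main obstacle is to estimate the remainder by $O(\|n\|_\alpha/q^4)$ uniformly in $n\in H^\alpha$ with $\alpha\in(3,4)$, because a naive Taylor bound on mode $\cos(2\pi jx)$ would introduce an unwanted factor $j^4$ which is not absolutely summable against $|\widehat n_j|\leq \|n\|_\alpha j^{-\alpha}$. I would handle this by treating low and high frequencies separately: for $j\leq q$ the Taylor/Poisson-summation argument on each mode gives a contribution of size $q^{-4}j^{4}|\widehat n_j|$, while for $j>q$ direct bounds on oscillatory Riemann sums give contributions decaying like $q^{-\alpha}|\widehat n_j|$. Splitting at the threshold $j=q$ and using $\alpha>3$ yields a convergent sum of size $\|n\|_\alpha/q^4$ as required. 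The smoothness of the orbit data (which depends only on $\varrho_\Omega\in\mathscr C^{r}$ with $r$ large enough, given by the $\mathscr C^8$-regularity assumed in Proposition \ref{proposition:isospectral_DKW}) ensures the remainder in the expansions of $x_k^{(q)}$ and $\sin\varphi_k^{(q)}$ can be differentiated enough times to justify these estimates.
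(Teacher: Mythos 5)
The paper itself gives no proof of this proposition: it is imported from \cite{DKW}, and your overall architecture --- pass to Lazutkin coordinates, expand the distinguished $1/q$-orbit in powers of $1/q$ with coefficients that are smooth functions of $k/q$, substitute into $\ell_q$ and collect orders --- is precisely the strategy of \cite{DKW}. The identification of the leading term with the Riemann sum $\Delta(m_\Omega n)_q+\widehat{(m_\Omega n)}_0$ is correct. Two small slips: since $m_{\Omega}=(2C\varrho_{\Omega})^{-1}$, the inversion should read $\sin\varphi=m_{\Omega}(x)\,y+\bigo{y^2}$ rather than $m_{\Omega}(x)^{-1}y$ (your displayed formula for $\sin\varphi_k^{(q)}$ already uses the correct version); and the contributions of $A_2$ and $B$ enter at order $q^{-2}$, so they belong to $\ell_{\bullet}$, not to $\ell_0$, which is essentially just the mean $\widehat{(m_\Omega n)}_0$.

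The genuine gap is in your remainder estimate. Summing your low-frequency bound $q^{-4}j^{4}|\widehat n_j|$ over $j\le q$ with $|\widehat n_j|\le \|n\|_{\alpha}j^{-\alpha}$ gives $q^{-4}\sum_{j\le q}j^{4-\alpha}\asymp q^{1-\alpha}\|n\|_{\alpha}$, which for $\alpha\in(3,4)$ lies between $q^{-3}$ and $q^{-2}$ and is therefore far larger than the claimed $\bigo{\|n\|_{\alpha}/q^{4}}$; the split at $j=q$ does not close the argument as written. The repair is to never take four derivatives of $n$: Taylor-expand $n$ about $k/q$ only to second order. The first-order term $n'(k/q)\,q^{-2}A_2(k/q)$, multiplied by $q^{-1}m_{\Omega}$ and summed over $k$, is $q^{-2}$ times a Riemann sum of a function lying in $H^{\alpha-1}$, hence equals $q^{-2}\bigl(\textstyle\int+\bigo{q^{-(\alpha-1)}}\bigr)$, and $q^{-(\alpha+1)}=\smallo{q^{-4}}$ because $\alpha>3$; the second-order Taylor remainder carries the factor $(q^{-2})^2=q^{-4}$ against $\|n''\|_{\infty}\le \sum_j (2\pi j)^2|\widehat n_j|\lesssim\|n\|_{\alpha}$, which converges precisely because $\alpha>3$. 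This is where the hypothesis $\alpha\in(3,4)$ is actually used, and it is the bookkeeping carried out in \cite{DKW}. A secondary point: the vanishing of the odd-power corrections in your orbit expansions follows from the structure of the Lazutkin normal form (no quadratic term in the twist), not from the axial symmetry alone; the symmetry only forces the coefficient functions to be odd in $x$.
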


This proposition suggests to introduce a renormalized version of previous objects, namely 
\[
\tilde n := \frac{n}{m_{\Omega}},
\quad
\tilde \ell_q(\tilde n) = \ell_q(m_{\Omega}\tilde n),\qquad \tilde n\in H^\alpha,\, q\geq 2.
\]
From now on we will drop the tilde above $n$ and work with maps $n\in H^{\alpha}$. So we write $\tilde\ell_q(n)$ for
\[
\tilde \ell_q(n) = \ell_q(m_{\Omega}n).
\]

We introduce the subspace $H^{\alpha}_{1/2}\subset H^{\alpha}$ of even $\tfrac{1}{2}$-periodic maps, \textit{i.e.}, the space of maps $n\in H^{\alpha}$ satisfying for any $x\in\RR$ the relations
\[
n(-x) = n(x),\quad n(x+\tfrac{1}{2}) = n(x).
\]
Note that a map $n\in H^{\alpha}$ belongs to $H^{\alpha}_{1/2}$ if and only if $\widehat n_{2j+1}=0$ for any $j\geq 0$, and therefore the sequence of Fourier coefficients of $n$ is given by $(\widehat n_{2j})_{j\geq 0}$. The norm $\|\cdot\|_{\alpha}$ restricts naturally to this space.

For a given $q_0>0$ we introduce the space 
\[
H^{\alpha}_{1/2,q_0} = \{n\in H^{\alpha}_{1/2}\,|\, \forall j< q_0\quad \widehat n_{2j}=0\}.
\]
We endow it with the norm induced by $\|\cdot\|_{\alpha}$. We define analogously the space $h^{\alpha}_{q_0}$ of sequences $u=(u_j)_{j\geq q_0}$ such that $j^{\alpha}\widehat u_{j}$ converges to $0$ with $j$.

Consider the operator $S_{\Omega}^{q_0}:H^{\alpha}_{1/2}\to h^{\alpha}_{q_0}$ defined for any $n\in H^{\alpha}_{1/2}$ by
\[
S_{\Omega}^{q_0}(n)_q = \tilde \ell_{2q}(n) - \ell_0(n)-\frac{1}{(2q)^2}\ell_{\bullet}(n),\qquad q\geq q_0.
\]
The operator $S_{\Omega}^{q_0}$ satisfies the following result:
\begin{proposition} {If a $\mathscr C^9$-smooth} deformation $(\Omega_{\tau})_{\tau\in I}$ is isospectral then
\begin{equation}
S_{\Omega_{\tau}}^{q_0}\left(m_{\Omega_{\tau}}n_{\tau}\right)_q = 0,\quad\forall q\geq q_0,\,\tau\in I.
\end{equation}
\end{proposition}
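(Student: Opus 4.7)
The argument is a direct combination of the two preceding results: Proposition~\ref{proposition:isospectral_DKW} gives exact vanishing of the DKW orbital sums on distinguished symmetric periodic orbits, while Proposition~\ref{proposition:asymptotics_DKW} expands those sums asymptotically as $q\to+\infty$, the $q^0$- and $q^{-2}$-order coefficients being precisely the linear functionals $\ell_0$ and $\ell_\bullet$ that the operator $S^{q_0}_{\Omega_\tau}$ is designed to subtract off.

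First, I would apply Proposition~\ref{proposition:isospectral_DKW}: the hypotheses are satisfied because $(\Omega_\tau)_{\tau\in I}$ is $\mathscr C^8$-smooth, isospectral, and each $\Omega_\tau$, being dihedrally symmetric, is in particular axis-symmetric. Hence
\[
\ell_q(n_\tau) = \sum_{k=0}^{q-1} n_\tau(s_k^{(q,\tau)}) \sin\varphi_k^{(q,\tau)} = 0 \qquad\forall q\geq 2,\; \forall\tau\in I.
\]
Specialising to $q\mapsto 2q$ for $q\geq q_0$ and using the definition $\tilde\ell_{q'}(n)=\ell_{q'}(m_{\Omega_\tau}n)$ in the post-tilde-drop convention (where $n_\tau$ now denotes the renormalised deformation map, so that $m_{\Omega_\tau}n_\tau$ is the geometric one), the identity above reads
\[
\tilde\ell_{2q}(m_{\Omega_\tau}n_\tau) = 0 \qquad \forall q\geq q_0,\; \forall\tau\in I.
\]

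Second, I would substitute this vanishing into the asymptotic expansion of Proposition~\ref{proposition:asymptotics_DKW}, evaluated at $\tilde\ell_{2q}(m_{\Omega_\tau}n_\tau)$. That expansion takes the form
\[
0 = \Delta_{2q} + \ell_0(m_{\Omega_\tau}n_\tau) + \tfrac{1}{(2q)^2}\ell_\bullet(m_{\Omega_\tau}n_\tau) + \mathcal O(q^{-4}) \qquad \forall q\geq q_0,
\]
where the Riemann-sum remainder satisfies $\Delta_{2q}=\mathcal O(q^{-\alpha})$ with $\alpha>3$ (from the definition of $H^\alpha$). Letting $q\to+\infty$ extracts $\ell_0(m_{\Omega_\tau}n_\tau)=0$; multiplying the residual relation by $(2q)^2$ and letting $q\to+\infty$ once more (using $\alpha>2$) then extracts $\ell_\bullet(m_{\Omega_\tau}n_\tau)=0$. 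Substituting these three vanishings into the definition
\[
S^{q_0}_{\Omega_\tau}(m_{\Omega_\tau}n_\tau)_q = \tilde\ell_{2q}(m_{\Omega_\tau}n_\tau) - \ell_0(m_{\Omega_\tau}n_\tau) - \tfrac{1}{(2q)^2}\ell_\bullet(m_{\Omega_\tau}n_\tau)
\]
yields the claim.

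The only real subtlety is the bookkeeping of the tilde-drop convention and of the $m_{\Omega_\tau}$-factor distinguishing the geometric deformation map from its renormalised version, together with the verification that the sequence-of-equations argument correctly isolates $\ell_0$ and $\ell_\bullet$ (which requires exactly the regularity assumption $\alpha>3$ fixed earlier); once this is sorted out, the proof is a mechanical substitution into the definitions of $\tilde\ell_q$, $\ell_0$, $\ell_\bullet$ and $S^{q_0}_{\Omega_\tau}$.
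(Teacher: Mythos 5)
Your argument is correct and follows exactly the paper's own route: Proposition~\ref{proposition:isospectral_DKW} gives $\ell_q(n_\tau)=0$, the expansion of Proposition~\ref{proposition:asymptotics_DKW} together with the decay $\Delta(m_{\Omega_\tau}n_\tau)_q=\mathcal O(q^{-\alpha})$, $\alpha>3$, forces $\ell_0(m_{\Omega_\tau}n_\tau)=\ell_\bullet(m_{\Omega_\tau}n_\tau)=0$, and substitution into the definition of $S^{q_0}_{\Omega_\tau}$ concludes. You in fact spell out the two-step limit extraction (and the renormalization bookkeeping) that the paper only alludes to with ``this comes from the different decays''.
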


\begin{proof}
Fix a given $\tau\in I$. Since the deformation is isospectral, $\ell_q(n_{\tau}) = 0$ for any $q\geq 2$, see Proposition \ref{proposition:isospectral_DKW}. From the asymptotic expansion of $\ell_q$ given in \eqref{equation:asymptotics_ell}, it follows that 
\[
\ell_0(m_{\Omega_{\tau}}n_{\tau})=\ell_{\bullet}(m_{\Omega_{\tau}}n_{\tau})=0.
\]
This comes from the expansion \eqref{equation:asymptotics_ell} combined with two observations: 

1) for $s\in\RR$, if one consider the sequence $u^{(s)} = (q^{s})_{q>0}$, then the family $(u^{(s)}) _{s\in\RR}$ is linearly independent in the space of sequences.

2) since the deformation is $\mathscr C^9$-smooth, the map $m_{\Omega_{\tau}}n_{\tau}$ is in $H^{\alpha}$ and therefore $\Delta(m_{\Omega_{\tau}}n_{\tau})_q=\mathcal{O}(q^{-\alpha})$, see Proposition \ref{proposition:invertibility_dirichlet_mobius}. 

The result follows.
\end{proof}

Let $D_{\Omega}^{q_0}:H^{\alpha}_{1/2,q_0}\to h^{\alpha}_{q_0}$ be the restriction of $S_{\Omega}^{q_0}$ to $H^{\alpha}_{1/2,q_0}$.

\begin{proposition}[\cite{DKW}]
\label{proposition:inveritbility_DKW}
If $\Omega$ is a Birkhoff table with a $\mathscr C^9$-smooth axis-symmetric boundary, then there exists $q_0\geq 2$ such that $D_{\Omega}^{q_0}:H^{\alpha}_{1/2,q_0}\to h^{\alpha}_{q_0}$ is invertible.
\end{proposition}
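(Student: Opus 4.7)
The plan is to realize $D_\Omega^{q_0}$ as a small perturbation of a Dirichlet--M\"obius type operator whose invertibility is classical, and then close by a Neumann series once $q_0$ is taken large.

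First, I would apply the asymptotic expansion of Proposition \ref{proposition:asymptotics_DKW}. After the renormalization $\tilde\ell_q(n) = \ell_q(m_\Omega n)$ it reads $\tilde\ell_q(n) = \Delta(n)_q + \ell_0(n) + q^{-2}\ell_\bullet(n) + \mathcal{O}(q^{-4}\|n\|_\alpha)$; substituting $q \mapsto 2q$ and subtracting the explicit terms $\ell_0(n)$ and $(2q)^{-2}\ell_\bullet(n)$ already built into $S_\Omega^{q_0}$ yields
\[
S_\Omega^{q_0}(n)_q = \Delta(n)_{2q} + R_q(n), \qquad q\geq q_0,
\]
with $|R_q(n)| = \mathcal{O}(q^{-4}\|n\|_\alpha)$ uniformly in $n$. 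Thus $D_\Omega^{q_0} = P + R$ on $H^{\alpha}_{1/2,q_0}$, where $P(n)_q := \Delta(n)_{2q}$.

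Second, I would analyze $P$ in Fourier coordinates. Any $n\in H^{\alpha}_{1/2,q_0}$ expands as $n(x) = \sum_{j\geq q_0}\widehat n_{2j}\cos(4\pi j x)$, and orthogonality of characters gives
\[
\Delta(n)_{2q} = \sum_{l\geq 1}\widehat n_{2lq}, \qquad q\geq q_0.
\]
Writing $a_j := \widehat n_{2j}$, the relation $P(n)_q = \sum_{l\geq 1} a_{lq}$ is the divisor-sum transform on the poset of positive integers ordered by divisibility, whose classical inverse is given by M\"obius: $a_q = \sum_{l\geq 1}\mu(l)\, P(n)_{lq}$. Both series are absolutely convergent thanks to the $\alpha$-polynomial decay of $a$, and direct estimates (using $|\mu|\leq 1$) yield $\|P(n)\|_\alpha \leq \zeta(\alpha)\|n\|_\alpha$ together with a reverse bound on $P^{-1}$. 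This is the content of the Dirichlet--M\"obius invertibility Proposition referenced as Proposition \ref{proposition:invertibility_dirichlet_mobius}, and it shows that $P: H^{\alpha}_{1/2,q_0} \to h^{\alpha}_{q_0}$ is an isomorphism with uniform two-sided bounds.

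Third, since $\alpha<4$ the remainder operator satisfies $\|R\|_{\mathrm{op}} = \mathcal{O}(q_0^{\alpha-4})\to 0$ as $q_0\to\infty$. Picking $q_0$ large enough that $\|P^{-1}\|\cdot\|R\|<1$, the factorization $D_\Omega^{q_0} = P\,(I + P^{-1}R)$ is invertible by a Neumann series argument, concluding the proof.

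The main technical obstacle is the uniform control of $P^{-1}$ in the weighted norms: M\"obius inversion is purely formal absent sufficient decay, so the real analytic content lies in matching the $\alpha$-weighted topologies on $H^{\alpha}_{1/2,q_0}$ and $h^{\alpha}_{q_0}$. The restriction to the subspace $H^{\alpha}_{1/2,q_0}$ (rather than all of $H^{\alpha}_{1/2}$) plays a dual role here: it makes the divisor-sum transform stay within the correct range of indices, and it pairs the domain of $P$ with the target space of $S_\Omega^{q_0}$ so that the factorization $D_\Omega^{q_0} = P + R$ is well-defined as a bounded operator between the same two spaces.
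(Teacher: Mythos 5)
Your proposal is correct and follows essentially the same route as the paper: the paper also writes $D_{\Omega}^{q_0}=\Delta_e+R$ with $\Delta_e(n)_q=\Delta(n)_{2q}$, invokes the Dirichlet--M\"obius inversion of Proposition \ref{proposition:invertibility_dirichlet_mobius} (with a norm bound on $M$ uniform in $q_0$), bounds $\|R\|_{\alpha}\leq K q_0^{\alpha-4}$, and concludes via the factorization $D_{\Omega}^{q_0}=\Delta_e(I+MR)$ and a Neumann series. Your emphasis on the uniformity of the two-sided bounds for $P^{-1}$ in the $\alpha$-weighted norms is exactly the point the paper records as ``the norm of $M$ is uniformly bounded in $q_0$.''
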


\begin{proof}
Let $q_0\geq 2$. By construction of $\tilde\ell_q$ for $q\geq q_0$, we can decompose $D_{\Omega}^{q_0}$ in 
\[
D_{\Omega}^{q_0} = \Delta_s+R
\]
where $\Delta_s$ is the symmetric Dirichlet operator defined by
\[
\Delta_s(n)_q = \Delta(n)_{2q},\qquad n\in H^{\alpha}_{1/2},\, q\geq q_0,
\]
and $R:H^{\alpha}_{1/2,q_0}\to h^{\alpha}_{q_0}$ is a bounded operator whose norm satisfies
\[
\|R\|\leq \frac{K}{q_0^{4-\alpha}}
\]
for a given constant $K>0$ independant of $q_0$.

By Proposition \ref{proposition:invertibility_dirichlet_mobius}, $\Delta_s$ is invertible with inverse the operator $M$ defined in Definition \ref{definition:dirichlet_mobius}, which we call \textit{Möbius} operator. Hence
\[
D_{\Omega}^{q_0} = \Delta_s(I+MR)
\]
where $I$ is the invertible map associating to any $n\in H^{\alpha}_{1/2,q_0}$ the sequence of its Fourier coefficients. Norm estimates give
\[
\|MR\|_{\alpha}\leq \|M\|_{\alpha}\|R\|_{\alpha}\leq \frac{KK^{\ast}}{q_0^{4-\alpha}},
\]
where $K^{\ast}>0$ is a constant independent of $q_0$ which bounds the norm of $M$ -- it exists by Proposition \ref{proposition:invertibility_dirichlet_mobius}. 

Therefore $\|MR\|_{\alpha}<1$ for sufficiently large $q_0$ and hence $D_{\Omega}^{q_0}$ is invertible in this case.
\end{proof}

\begin{corollary}[\cite{DKW}]
\label{proposition:dimension_DKW}
Let $q_0\geq 2$ such that $D_{\Omega}^{q_0}$ is invertible. The operator $S_{\Omega}^{q_0}$ is onto and its kernel has dimension $q_0$.
\end{corollary}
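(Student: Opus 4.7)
The plan is to deduce the corollary from the invertibility of $D_{\Omega}^{q_0}$ by a direct linear-algebra argument based on the natural splitting of $H^{\alpha}_{1/2}$ into its low-Fourier-mode subspace and $H^{\alpha}_{1/2,q_0}$. Concretely, define
\[
V_{q_0}=\operatorname{span}\bigl(1,\cos(4\pi x),\cos(8\pi x),\ldots,\cos\bigl(4\pi(q_0-1)x\bigr)\bigr),
\]
which is a $q_0$-dimensional subspace of $H^{\alpha}_{1/2}$ consisting of even $\tfrac{1}{2}$-periodic trigonometric polynomials whose Fourier coefficients $\widehat n_{2j}$ vanish for $j\geq q_0$. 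From the Fourier characterization of $H^{\alpha}_{1/2,q_0}$, one has the topological direct-sum decomposition
\[
H^{\alpha}_{1/2}=V_{q_0}\oplus H^{\alpha}_{1/2,q_0},
\]
and every $n\in H^{\alpha}_{1/2}$ writes uniquely as $n=v+w$ with $v\in V_{q_0}$ and $w\in H^{\alpha}_{1/2,q_0}$.

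The next step is to use the linearity of $S_{\Omega}^{q_0}$ to write
\[
S_{\Omega}^{q_0}(n)=S_{\Omega}^{q_0}(v)+D_{\Omega}^{q_0}(w),
\]
using that $D_{\Omega}^{q_0}$ is by definition the restriction of $S_{\Omega}^{q_0}$ to $H^{\alpha}_{1/2,q_0}$. For surjectivity, given any target $u\in h^{\alpha}_{q_0}$, I take $v=0$ and set $w=(D_{\Omega}^{q_0})^{-1}(u)\in H^{\alpha}_{1/2,q_0}$, which is well-defined by the invertibility hypothesis and yields $S_{\Omega}^{q_0}(w)=u$. Hence $S_{\Omega}^{q_0}$ is onto.

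For the kernel, the equation $S_{\Omega}^{q_0}(n)=0$ becomes $D_{\Omega}^{q_0}(w)=-S_{\Omega}^{q_0}(v)$, which, by invertibility, has the unique solution
\[
w=-\bigl(D_{\Omega}^{q_0}\bigr)^{-1}\!\bigl(S_{\Omega}^{q_0}(v)\bigr).
\]
Thus the kernel is exactly the graph of the bounded linear map $v\mapsto -(D_{\Omega}^{q_0})^{-1}(S_{\Omega}^{q_0}(v))$ from $V_{q_0}$ to $H^{\alpha}_{1/2,q_0}$, and is therefore linearly isomorphic to $V_{q_0}$. In particular $\dim\ker S_{\Omega}^{q_0}=\dim V_{q_0}=q_0$. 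I do not expect any real obstacle here, as the whole argument is a straightforward consequence of the splitting and of the already established invertibility of $D_{\Omega}^{q_0}$ (Proposition \ref{proposition:inveritbility_DKW}); the only point requiring care is the verification that $S_{\Omega}^{q_0}$ indeed restricts nicely to $V_{q_0}$, which follows from the boundedness of the $\ell_q$'s and of $\ell_0,\ell_\bullet$ on $H^{\alpha}_{1/2}$.
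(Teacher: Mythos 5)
Your proof is correct and follows essentially the same route as the paper: surjectivity from the invertibility of the restriction $D_{\Omega}^{q_0}$, and the identification of $\ker S_{\Omega}^{q_0}$ as the graph of $v\mapsto -(D_{\Omega}^{q_0})^{-1}S_{\Omega}^{q_0}(v)$ over the $q_0$-dimensional low-mode subspace. Your explicit spanning set $1,\cos(4\pi x),\ldots,\cos(4\pi(q_0-1)x)$ is in fact the more careful rendering of the paper's low-mode space, consistent with the definition of $H^{\alpha}_{1/2,q_0}$ via the even-indexed coefficients $\widehat n_{2j}$.
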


\begin{proof}
The operator $S_{\Omega}^{q_0}$ is surjective since it restricts to $H^{\alpha}_{1/2,q_0}$ as an invertible operator.

Now, let $n\in H^{\alpha}_{1/2}$. Write $n = n_L+n_H$ where $n_L(x):=\sum_{j=0}^{q_0-1}\widehat n_j\cos(2\pi jx)$ and $n_H=n-n_L$. This decomposition and the invertibility of $D_{\Omega}^{q_0}$ induces the equivalence
\[
S_{\Omega}^{q_0}(n) = 0
\quad
\Leftrightarrow
\quad
n_H = -(D_{\Omega}^{q_0})^{-1}S_{\Omega}^{q_0}(n_L)
\]
and the result follows.
\end{proof}

\section{Isospectral operator}
\label{section:isospectral_operator}
Let an integer $q_0\geq 2$ and $(\nu,\sigma)\in(0,1)\times(\frac{5}{2},+\infty)$. Theorem \ref{theorem:KAM_general} provides a $\delta=\delta(r,\nu,\sigma)>0$ and an integer $\ell = \ell(r,\nu,\sigma)>0$ for any integer $r$ sufficiently large. Since $\ell(r,\nu,\sigma)\to+\infty$ as $r\to+\infty$, we can consider an integer $r\geq 9$ sufficiently large such that $\ell\geq q_0$.

As a consequence of Theorem \ref{theorem:KAM_general}, any
strongly convex planar billiard domain $\Omega$ with $\mathscr C^r$-smooth boundary has a family of KAM curve of type $(\nu,\sigma)$ defined for rotation numbers in $\mathcal D(\nu,\sigma)\cap (0,\delta)$.

Let $\Omega$ be a domain with $\mathscr C^r$-smooth boundary. We can consider its KAM density $\mu_{\Omega}$ -- see Definition \ref{definition:KAM_density} -- which is therefore $\mathscr C^{\ell}$-smooth.

If $\omega_0\in(0,1/2)$ is a density point of $\mathcal D(\nu,\sigma)$ and $J=(j_0,\ldots,j_{q_0-1})$ with $j_i \in\{0,\ldots,N\}$, we consider the map
\[
T_{\Omega}^{J,\omega_0}: H^{\alpha}_{1/2} \to h^{\alpha}
\]
defined for all $n\in H^{\alpha}_{1/2}$ by
\begin{equation}
T_{\Omega}^{J,\omega_0}(n)_q = \left\{
\begin{array}{cc}\vspace*{0.2cm}
\int_0^{1} \frac{n(x)}{m_{\Omega}(x)}\partial_{\omega}^{j_q}\mu_{\Omega}(\omega_0,x)dx & \text{if }q<q_0\\
S_{\Omega}^{q_0}(n)_q & \text{if }q\geq q_0.
\end{array}
\right.
\end{equation}

As $\Delta_s(H^{\alpha}_{1/2})\subset h^{\alpha}$, see Definition \ref{definition:dirichlet_mobius}, it follows from Proposition \ref{proposition:asymptotics_DKW} that

\begin{proposition}
\label{proposition:operators_isospectral_deformations}
$T_{\Omega}^{J,\omega_0}$ defines a bounded operator 
\[
T_{\Omega}^{J,\omega_0}:H^{\alpha}_{1/2} \to h^{\alpha}.
\]
If a $\mathscr C^r$-smooth deformation $(\Omega_{\tau})_{\tau\in I}$ is isospectral and if we set $\Omega_0=\Omega$ and $n := n_0$ then
\[
T_{\Omega}^{J,\omega_0}(m_{\Omega} n) = 0.
\]
\end{proposition}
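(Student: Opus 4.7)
The plan is to verify separately the two assertions: that $T_{\Omega}^{J,\omega_0}$ is a bounded operator $H^{\alpha}_{1/2}\to h^{\alpha}$, and that it annihilates $m_{\Omega}n$ whenever the deformation is isospectral. For boundedness, the tail components with $q\geq q_0$ are exactly $S_{\Omega}^{q_0}(n)_q$, and the asymptotic expansion of Proposition \ref{proposition:asymptotics_DKW} combined with the definition of $S_{\Omega}^{q_0}$ yields $\|S_{\Omega}^{q_0}(n)\|_{\alpha}\lesssim\|n\|_{\alpha}$ with constants depending only on $\Omega$. The finitely many head components with $q<q_0$ are linear functionals of the form $n\mapsto\int_0^1 n(x)\,m_{\Omega}(x)^{-1}\partial_{\omega}^{j_q}\mu_{\Omega}(\omega_0,x)\,dx$. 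Since $r$ has been chosen so that Theorem \ref{theorem:KAM_billiards} applies with $\ell=q_0$, the KAM density $\mu_{\Omega}$ is $\mathscr{C}^{q_0}$-smooth in the Whitney sense, so each $\partial_{\omega}^{j_q}\mu_{\Omega}(\omega_0,\cdot)$ is a bounded continuous function on $\mathbb{S}^1$. Together with smoothness of $m_{\Omega}^{-1}$, these give bounded linear functionals on $H^{\alpha}_{1/2}\hookrightarrow L^2(\mathbb{S}^1)$.

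For the vanishing, consider an isospectral $\mathscr{C}^r$-smooth deformation $(\Omega_{\tau})_{\tau\in I}$ with $\Omega_0=\Omega$ and $n=n_0$. The tail identity $S_{\Omega}^{q_0}(m_{\Omega}n)_q=0$ for $q\geq q_0$ is exactly the content of the proposition appearing just before the definition of $T_{\Omega}^{J,\omega_0}$. For the head identity, Corollary \ref{corollary:orthogonality_property} provides
\[
\Phi(\omega) \;:=\; \int_0^1 n(x)\,\mu_{\Omega}(\omega,x)\,dx \;=\; 0, \qquad \forall\,\omega\in\mathcal{D}(\nu,\sigma)\cap[0,\delta).
\]
By Whitney $\mathscr{C}^{q_0}$-smoothness of $\mu_{\Omega}$ and the resulting uniform bounds on its $\omega$-derivatives, one may differentiate under the integral sign, so $\Phi$ extends to a $\mathscr{C}^{q_0}$-smooth function on $[0,\delta)$ satisfying $\Phi^{(j)}(\omega)=\int_0^1 n(x)\,\partial_{\omega}^{j}\mu_{\Omega}(\omega,x)\,dx$ for $0\leq j\leq q_0$.

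The conclusion follows from the standard fact that a $\mathscr{C}^k$-function vanishing on a measurable set has all its derivatives up to order $k$ equal to zero at every density point of that set: applying this to $\Phi$ at the density point $\omega_0$ gives $\Phi^{(j)}(\omega_0)=0$ for all $0\leq j\leq q_0$, and in particular $\Phi^{(j_q)}(\omega_0)=0$ for each $q<q_0$. Substituting $m_{\Omega}n$ into the first block of the definition cancels $m_{\Omega}$ in the integrand and recovers $\Phi^{(j_q)}(\omega_0)$, hence $T_{\Omega}^{J,\omega_0}(m_{\Omega}n)_q=0$ for $q<q_0$.

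The main technical point that deserves care is justifying the Whitney differentiation under the integral: one needs that the Whitney extensions of $\partial_{\omega}^{j}\mu_{\Omega}(\cdot,x)$ are uniformly bounded in $x$ on a neighborhood of $\omega_0$, which is a direct consequence of the Whitney regularity produced by Theorem \ref{theorem:KAM_billiards}. Beyond this point the proof is a clean assembly of Corollary \ref{corollary:orthogonality_property}, the density-point argument, and the isospectral identity for $S_{\Omega}^{q_0}$ already at our disposal.
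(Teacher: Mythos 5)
Your proof is correct and follows the route the paper intends: the tail block is handled by the isospectral identity for $S_{\Omega}^{q_0}$ together with the asymptotics of Proposition \ref{proposition:asymptotics_DKW}, and the head block by Corollary \ref{corollary:orthogonality_property} combined with the Taylor/density-point argument at $\omega_0$. The paper itself dispatches the statement with a one-line citation of Propositions \ref{proposition:isospectral_DKW} and \ref{proposition:asymptotics_DKW}, which only accounts for the components with $q\geq q_0$; your write-up correctly supplies the missing justification for the first $q_0$ components (Whitney differentiation under the integral and vanishing of the jet of $\omega\mapsto\int_0^1 n(x)\mu_{\Omega}(\omega,x)\,dx$ at a density point of $\mathcal D(\nu,\sigma)$), which the paper only sketches in its outline section.
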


Hence we are brought to study, the injectivity of $T_{\Omega}^{J,\omega_0}$. Since injectivity of operators on infinite dimensional spaces is not stable under small perturbations, we will study invertibility properties of $T_{\Omega}^{J,\omega_0}$.

\begin{proposition}
\label{proposition:operators_continuity}
Given $\delta>0$, there exists $\varepsilon>0$ and an integer $r>0$ such that if $\Omega'$ is $\varepsilon$-$\mathscr C^r$-close to $\Omega$, the operator $T_{\Omega'}^{J,\omega_0}$ is well-defined and
\[
\|T_{\Omega}^{J,\omega_0}-T_{\Omega'}^{J,\omega_0}\|_{\alpha}<\delta.
\] 
\end{proposition}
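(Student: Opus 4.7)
The plan is to estimate the two blocks of $T_\Omega^{J,\omega_0}$ separately, since the operator is defined piecewise at the cutoff $q_0$, and then recombine using the norm on $h^\alpha$. For the first $q_0$ components, each one is a continuous linear functional of $n$ of the form $\int_0^1 n(x)\phi_{\Omega,q}(x)\,dx$, with kernel
\[
\phi_{\Omega,q}(x) = \frac{\partial_\omega^{j_q}\mu_\Omega(\omega_0,x)}{m_\Omega(x)}.
\]
For the tail $q\geq q_0$, the bound will come from the asymptotic expansion of Proposition \ref{proposition:asymptotics_DKW}, together with continuity of all coefficients appearing in it.

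For the low-frequency block I would argue as follows. By Theorem \ref{theorem:KAM_general}, applied with a smoothness level $\ell \geq \max_q j_q$ and to the triple $(\nu,\sigma,\delta)$ from which $\mu_\Omega$ was built, one can choose $r$ and $\varepsilon$ so that any $\Omega'$ which is $\varepsilon$-$\mathscr C^r$-close to $\Omega$ carries its own KAM curve of type $(\nu,\sigma)$ which is $\mathscr C^\ell$-close to that of $\Omega$. Because $\omega_0$ is a density point of $\mathcal D(\nu,\sigma)$, Whitney smoothness then transfers to closeness of the jets in $\omega$ up to order $\max_q j_q$ at $\omega_0$, so $\partial_\omega^{j_q}\mu_{\Omega'}(\omega_0,\cdot)$ is $\mathscr C^0$-close to $\partial_\omega^{j_q}\mu_\Omega(\omega_0,\cdot)$. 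Since $m_\Omega = (2C\varrho_\Omega)^{-1}$ involves only two derivatives of the arc-length parametrization, $m_{\Omega'}$ is $\mathscr C^{r-2}$-close to $m_\Omega$ and uniformly bounded below. Thus $\phi_{\Omega',q}-\phi_{\Omega,q}$ is small in $L^\infty$, and consequently
\[
\left|T_\Omega^{J,\omega_0}(n)_q - T_{\Omega'}^{J,\omega_0}(n)_q\right| \leq \|\phi_{\Omega',q}-\phi_{\Omega,q}\|_{L^\infty}\|n\|_{L^1} \leq C\,\varepsilon\,\|n\|_\alpha
\]
for each $q<q_0$, with $C$ depending on $J,\omega_0,\Omega$.

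For the high-frequency block I would decompose, using Proposition \ref{proposition:asymptotics_DKW},
\[
S_\Omega^{q_0}(n)_q = \Delta(m_\Omega^2 n)_{2q} + \bigo{\frac{\|n\|_\alpha}{q^4}},
\]
after absorbing the Lazutkin weights into the linear forms $\ell_0,\ell_\bullet$. The Dirichlet-type operator $n\mapsto \Delta(m_\Omega^2 n)_{2q}$ depends on $\Omega$ only through the density $m_\Omega^2$, which varies $\mathscr C^{r-2}$-continuously in $\Omega$; this yields, at the level of Fourier coefficients, a bound $\|\Delta_e^\Omega - \Delta_e^{\Omega'}\|_{H^\alpha_{1/2}\to h^\alpha_{q_0}} \leq C\varepsilon$. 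The linear forms $\ell_0$ and $\ell_\bullet$ are built from the same curvature and Lazutkin data and are likewise continuous in $\Omega$. Finally the distinguished $2q$-periodic orbits $(x_k^{(2q)},\varphi_k^{(2q)})$, being transverse critical points of the billiard action, depend $\mathscr C^r$-continuously on $\Omega$ by the implicit function theorem, which provides continuity for the finite-sum part of $\tilde\ell_{2q}$. Summing the bounds according to the norm of $h^\alpha$ gives the desired $\delta$-estimate once $\varepsilon$ is small enough.

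The main obstacle, and the step where care is required, is establishing that the remainder $\bigo{\|n\|_\alpha/q^4}$ in Proposition \ref{proposition:asymptotics_DKW} is \emph{uniform} in the domain on a $\mathscr C^r$-neighborhood of $\Omega$. This entails revisiting the asymptotic derivation of \cite{DKW} and checking that each implicit constant is controlled in terms of the $\mathscr C^r$-norm of the boundary parametrization, of bounds on the curvature and its reciprocal, and of $\mathscr C^r$-control on the distinguished periodic orbits (themselves $\mathscr C^r$-continuous in $\Omega$ via the implicit function theorem applied to the generating function). Once this uniform remainder is in hand, together with the continuity of the finitely many auxiliary coefficients $m_\Omega$, $\ell_0$, $\ell_\bullet$, and the low-frequency estimate above, one obtains the operator norm bound $\|T_\Omega^{J,\omega_0}-T_{\Omega'}^{J,\omega_0}\|_\alpha < \delta$ for all $\Omega'$ sufficiently $\mathscr C^r$-close to $\Omega$.
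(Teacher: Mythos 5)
Your argument follows the same route as the paper: the low-frequency block is handled exactly as in the paper's proof, by applying Theorem \ref{theorem:KAM_general} to get $\mathscr C^{\ell}$-closeness of the KAM curves and hence, via the density-point/Whitney-jet argument, closeness of $\partial_{\omega}^{j_q}\mu_{\Omega'}(\omega_0,\cdot)$, while for the tail block the paper simply invokes the continuity of $\Omega'\mapsto S_{\Omega'}^{q_0}$ established in \cite{DKW}, which is precisely the content you unpack (continuity of $m_{\Omega}$, of $\ell_0,\ell_{\bullet}$, of the distinguished orbits, and uniformity of the $\bigo{\|n\|_{\alpha}/q^4}$ remainder over a $\mathscr C^r$-neighborhood). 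Your identification of the uniform remainder as the delicate point is accurate, but since that verification is delegated to \cite{DKW} in the paper, your proposal is essentially the paper's proof with more detail.
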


\begin{proof}
The continuity of $\Omega'\mapsto S_{\Omega'}^{q_0}$ in the topology of domains with $\mathscr C^9$-smooth boundary follows from \cite{DKW}. 

For the continuity of the $q_0$ first values of $T_{\Omega}^{J,\omega_0}$, we apply Theorem \ref{theorem:KAM_general}: there exists $r>0$ such that if $\Omega'$ is sufficiently $\mathscr C^r$-close to $\Omega$, it has a KAM curve $\Gamma_{\Omega'}$ of type $(\nu,\sigma)$ defined in the same $\delta$-neighbohood of the boundary, hence containing $\omega_0$ as a density point in its set of rotation numbers. Moreover the map $\Omega'\mapsto\Gamma_{\Omega'}$ is continuous from the space of domains with $\mathscr C^r$-smooth boundary to the space of $\mathscr C^{q_0}$-smooth functions. Hence so does the map $\Omega'\mapsto\mu_{\Omega'}$, which concludes the result.
\end{proof}

\begin{theorem}
\label{theorem:invertibility_ellipse}
Let $\Omega=\mathscr E$ be an ellipse which is not a disk. There exists $q_0>0$ such that for any $(\nu,\sigma)\in(0,1)\times(\frac{5}{2},+\infty)$ and any accumulation point $\omega_0\in\mathcal D(\nu,\sigma)$ there exists $J=(j_0,\ldots,j_{q_0})$ with $j_0<\ldots<j_{q_0-1}$ for which the operator
$T_{\mathscr E}^{J,\omega_0}$ is invertible. 
\end{theorem}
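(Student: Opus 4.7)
The plan is to reduce invertibility of $T_{\mathscr{E}}^{J,\omega_0}$ to a finite-dimensional non-degeneracy condition via a Schur-complement argument, and then to verify this condition using an explicit description of $\mu_{\mathscr{E}}$ extracted from the integrability of the ellipse billiard.

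\textbf{Fixing $q_0$ and Schur reduction.} Since $\mathscr{E}$ has analytic (hence $\mathscr C^8$) boundary and is axially symmetric, Proposition \ref{proposition:inveritbility_DKW} provides an integer $q_0\geq 2$ for which $D_{\mathscr{E}}^{q_0}:H^{\alpha}_{1/2,q_0}\to h^{\alpha}_{q_0}$ is invertible. Corollary \ref{proposition:dimension_DKW} then yields that $K:=\ker S_{\mathscr{E}}^{q_0}$ has dimension $q_0$ and is a graph over the low-mode subspace $V_L:=\mathrm{span}\{1,\cos(4\pi x),\ldots,\cos(4\pi(q_0-1)x)\}$. Splitting $H^{\alpha}_{1/2}=V_L\oplus H^{\alpha}_{1/2,q_0}$ and $h^{\alpha}=\RR^{q_0}\oplus h^{\alpha}_{q_0}$, the operator $T_{\mathscr{E}}^{J,\omega_0}$ has $(2,2)$-block equal to $D_{\mathscr{E}}^{q_0}$. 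A direct Schur-complement computation, combined with the natural identification $V_L\cong K$ given by $n_L\mapsto n_L-(D_{\mathscr{E}}^{q_0})^{-1}S_{\mathscr{E}}^{q_0}(n_L)$, shows (invoking the open mapping theorem and finite-dimensionality of the relevant pieces) that $T_{\mathscr{E}}^{J,\omega_0}$ is invertible if and only if
\[
\Phi_J:K\to\RR^{q_0},\qquad n\mapsto\bigl(\overline f_{j_0}(n),\ldots,\overline f_{j_{q_0-1}}(n)\bigr)
\]
is a linear isomorphism, where $\overline f_i(n):=\int_0^1 \tfrac{n(x)}{m_{\mathscr{E}}(x)}\partial_{\omega}^{i}\mu_{\mathscr{E}}(\omega_0,x)\,dx$.

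\textbf{Totality of the family $(\overline f_i)$.} This is the heart of the argument. By the integrability of the ellipse billiard, the rotational invariant curves of $\mathscr{E}$ are parametrized by the confocal caustics; a direct computation in Lazutkin coordinates \eqref{equation:Lazutkin_change} gives a closed form for $\mu_{\mathscr{E}}$ that is analytic in $(\omega,x)\in[0,1/2)\times\SS^1$. Under the dihedral symmetry of $\mathscr{E}$ we obtain an expansion
\[
\mu_{\mathscr{E}}(\omega,x)=\sum_{k\geq 0}a_k(\omega)\cos(4\pi kx)
\]
with analytic coefficients $a_k:[0,1/2)\to\RR$. Setting $b_k:=\int_0^1 \tfrac{n(x)}{m_{\mathscr{E}}(x)}\cos(4\pi kx)\,dx$, the assumption $\overline f_i(n)=0$ for every $i\geq 0$ gives $\sum_k a_k^{(i)}(\omega_0)b_k=0$ for all $i$, so the analytic function $\omega\mapsto\sum_k a_k(\omega)b_k$ has a zero Taylor expansion at $\omega_0$ and hence vanishes identically on $[0,1/2)$. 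Using the non-disk hypothesis at this point to ensure linear independence of $(a_k)_{k\geq 0}$ as analytic functions, we force $b_k=0$ for every $k$, hence $n/m_{\mathscr{E}}=0$ and finally $n=0$. Thus the family $(\overline f_i)_{i\geq 0}$ is total on $H^{\alpha}_{1/2}$, and a fortiori on $K$.

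\textbf{Extracting $J$ and main obstacle.} Since $K$ is $q_0$-dimensional and the restrictions $(\overline f_i|_K)_{i\geq 0}$ have trivial common kernel, a standard inductive selection produces indices $j_0<j_1<\cdots<j_{q_0-1}$ such that $\dim\!\bigl(\bigcap_{q=0}^{k}\ker\overline f_{j_q}|_K\bigr)=q_0-(k+1)$ at every stage, so $(\overline f_{j_q}|_K)_{q=0}^{q_0-1}$ is a basis of $K^\ast$; the corresponding $\Phi_J$ is then an isomorphism and the first step concludes. The principal difficulty lies in the totality step: writing $\mu_{\mathscr{E}}$ explicitly (via the confocal caustic parameter, elliptic integrals, and the Lazutkin change of variable) and rigorously establishing linear independence of the analytic coefficients $a_k(\omega)$ is precisely where the non-disk hypothesis must genuinely be used, since a disk would produce an $\omega$-degenerate caustic structure and destroy totality.
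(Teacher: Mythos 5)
Your overall architecture coincides with the paper's: the same choice of $q_0$ via Proposition \ref{proposition:inveritbility_DKW}, the same linear forms $\overline f_i$ built from $\partial_\omega^i\mu_{\mathscr E}(\omega_0,\cdot)$, and the same inductive extraction of $j_0<\cdots<j_{q_0-1}$ using that $\ker S_{\mathscr E}^{q_0}$ is $q_0$-dimensional. Your Schur-complement reduction is an equivalent reformulation of the paper's ``completion of operators'' lemma (Proposition \ref{proposition:invertibility_completion}): both say that, given surjectivity of $S$ and $\dim\ker S=q_0$, invertibility of the completed operator is equivalent to the forms separating points of $\ker S$. So far there is nothing to object to.

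The gap is in the totality step, which you correctly identify as the heart of the matter but whose proposed mechanism does not work. From $\sum_k a_k(\omega)b_k\equiv 0$ on $[0,1/2)$ you cannot conclude $b_k=0$ for all $k$ merely from \emph{linear independence} of the analytic functions $(a_k)_{k\ge 0}$: linear independence only rules out nontrivial \emph{finite} relations, whereas here the relation is an infinite convergent series. (For instance, any function lying in the closed span but not the algebraic span of a linearly independent family produces a nontrivial vanishing infinite combination.) One needs a stronger, graded structure. The paper obtains it by changing variables from $x$ to the elliptic angle $\varphi$, at which point the $\omega$-dependence of $\mu_{\mathscr E}$ enters only through the scalar $k_\omega^2$ via the factor $(1-k_\omega^2\sin^2\varphi)^{-1/2}$ (Proposition \ref{proposition:beta_ellipse}). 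Expanding this factor as a power series in $k_\omega^2$ turns the orthogonality relation into a genuine one-variable power series $\sum_j c_j k_\omega^{2j}\int_0^{2\pi}N(\varphi)\sin^{2j}\varphi\,d\varphi$ vanishing on an interval of values of $k_\omega$ (this is exactly where the non-disk hypothesis enters: the image of $\omega\mapsto k_\omega$ contains an interval), so each coefficient $\int_0^{2\pi}N(\varphi)\sin^{2j}\varphi\,d\varphi$ vanishes. The triangular structure of the Fourier expansion of $\sin^{2j}$ (Propositions \ref{proposition:even_powers_sin} and \ref{proposition:sin_fourier_vanish}) then kills the Fourier coefficients of $N$ one by one in a \emph{finite} triangular system at each stage, which is the legitimate replacement for your linear-independence claim. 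Without some such graded or triangular structure on the $a_k(\omega)$, your argument does not close.
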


\begin{proof}
Let $\Omega=\mathscr E$ be an ellipse which is not a disk. Choose $q_0\geq 2$ such that the operator $D_{\mathscr E}^{q_0}:H^{\alpha}_{q_0}\to h^{\alpha}_{q_0}$ associated to $\mathscr E$ and introduced in Subsection \ref{subsection:DKW_proof} is invertible. Recall that we are then given a surjective operator $S=S_{\mathscr E}^{q_0}$ whose kernel has dimension $q_0$ -- see Proposition \ref{proposition:dimension_DKW}.

The strategy is to \textit{complete} $S$ by linear forms $f_0,\ldots,f_{q_0-1}$ in the sense of Definition \ref{definition:completion} to build an operator $T=S_f$ so that the assumptions of Proposition \ref{proposition:invertibility_completion} are satisfied: the invertibility of $T$ will follow then directly.

Fix $(\nu,\sigma)\in(0,1)\times(\frac{5}{2},+\infty)$ and an accumulation point $\omega_0\in\mathcal D(\nu,\sigma)$. For $j\geq 0$, consider the linear map $f_j:L^2_{1/2}(\SS^1)\to\RR$ defined by
\[
f_j(n)= \int_0^{1} \frac{n(x)}{m_{\Omega}(x)}\partial_{\omega}^{j}\mu_{\Omega}(\omega_0,x)dx,
\qquad n\in L^2_{1/2}(\SS^1).
\]

By Proposition \ref{proposition:totality_ellipse},
\[
\cap_{j\geq 0}\ker f_j = \{0\}.
\]

Now we observe that given a finite dimensional space $V\subset L^2_{1/2}(\SS^1)$ of dimension $d>0$ and $j\geq0$, the intersection $V\cap \ker f_j$ is either $V$ or has dimension $d-1$. 

Since $\ker S$ has dimension $q_0$, we can construct inductively $q_0$ integers $j_0<\ldots<j_{q_0-1}$ such that 
\[
\dim \left(\ker S\cap\left(\cap_{p=0}^q \ker f_{j_p}\right)\right) = q_0-1-q,\qquad 0\leq q<q_0.
\]
This implies by construction that 
\[
\ker S\cap\left(\cap_{q=0}^{q_0-1} \ker f_{j_q}\right) = \{0\}.
\]

Therefore the completion $T = S_f$ of $S$ by $f$ -- see Definition \ref{definition:completion} -- satisfies the assumptions of Proposition \ref{proposition:invertibility_completion} and the result is proven.
\end{proof}

\section{Proof of Theorem \ref{theorem:main}}
\label{section:proof_main}

Let an ellipse $\mathscr E$ which is not a disk, and fix $\alpha\in(3,4)$. By Theorem \ref{theorem:invertibility_ellipse}, there exists $q_0>0$ and $J=(j_0,\ldots,j_{q_0})$ with $j_0<\ldots<j_{q_0-1}$ such that for $\omega_0=0$ the operator
\[
T_{\mathscr E}^{J,\omega_0}:H^{\alpha}_{1/2} \to h^{\alpha}
\] 
is invertible. 

Since the set of bounded invertible operators between Banach spaces is an open set, there is a $\delta>0$ such that any operator $T':H^{\alpha}_{1/2} \to h^{\alpha}$ satisfying $\|T'-T_{\mathscr E}^{J,\omega_0}\|_{\alpha}<\delta$ is also invertible.

But by Proposition \ref{proposition:operators_continuity}, one can find $\varepsilon>0$ and an integer $r>0$ such that for any strongly planar convex domain $\Omega$ with $\mathscr C^r$-smooth boundary which is $\varepsilon$-$\mathscr C^r$-close to $\mathscr E$, the corresponding operator 
$T_{\Omega}^{J,\omega_0}$
is well-defined
and satisfies
\[
\|T_{\Omega}^{J,\omega_0}-T_{\mathscr E}^{J,\omega_0}\|_{\alpha}<\delta.
\]
It is in particular invertible. In particular its kernel is trivial by Proposition \ref{proposition:operators_isospectral_deformations} and the proof is complete.

\appendix
\section{The family $\{\sin^{2j} \varphi \}_{j\geq 0}$}
\label{section:sinus_family}

This section is devoted to the study of the family of functions $\{\sin^{2j} \varphi \}_{j\geq 0}$. They appear in the expression of the KAM density of the ellipse, and play an important role in the proof of the key proposition of this paper, namely Proposition \ref{proposition:totality_ellipse}. More precisely, we show that they are trigonometric polynomial whose coefficients can be computed explicitely.

\begin{proposition}
\label{proposition:even_powers_sin}
Let $j\geq 0$. The function $\sin^{2j} \varphi$ admits the following Fourier expansion:
\[
\sin^{2j}\varphi = \sum_{k=0}^j s_{jk} \cos(2k\varphi)
\]
where
\[
s_{jk} := \left\{
\begin{array}{cl}
\frac{1}{4^j}\binom{2j}{j}&\text{if } k=0\\
2\frac{(-1)^{k}}{4^j}\binom{2j}{j-k}&\text{if } k>0.
\end{array}
\right.
\]
\end{proposition}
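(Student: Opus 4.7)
The plan is a direct computation via Euler's formula and the binomial theorem; no clever idea is required. First, using $(2i)^{2j}=4^j(-1)^j$, I would rewrite
\[
\sin^{2j}\varphi \;=\; \frac{(-1)^j}{4^j}\,(e^{i\varphi}-e^{-i\varphi})^{2j},
\]
and then expand the right-hand side by the binomial theorem. After the natural reindexation $n = m-j$, this expresses $\sin^{2j}\varphi$ as a linear combination of the exponentials $e^{2in\varphi}$ for $n\in\{-j,\ldots,j\}$, with coefficients involving $\binom{2j}{j+n}$ and explicit sign factors.

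Next, I would group the $n=0$ term separately and pair each $n\geq 1$ with $-n$. The binomial symmetry $\binom{2j}{j+n}=\binom{2j}{j-n}$ makes the two coefficients equal, so each pair collapses into a cosine via $e^{2in\varphi}+e^{-2in\varphi}=2\cos(2n\varphi)$. The constant term contributes $\frac{1}{4^j}\binom{2j}{j}$, which matches $s_{j0}$, while the pair of index $k\geq 1$ contributes $\frac{2(-1)^k}{4^j}\binom{2j}{j-k}\cos(2k\varphi)$, which matches $s_{jk}$ exactly.

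There is no genuine obstacle here: the entire argument is sign bookkeeping. The only point requiring a little care is to combine the three sources of $(-1)$-factors — one from $(2i)^{2j}$, one from the minus sign inside $(e^{i\varphi}-e^{-i\varphi})^{2j}$, and one from the reindexation $n=m-j$ — and to check that they consolidate into precisely the announced $(-1)^k$ for $k>0$. An alternative route would be induction on $j$ using $\sin^{2(j+1)}\varphi = \tfrac{1}{2}(1-\cos 2\varphi)\sin^{2j}\varphi$ together with product-to-sum identities, but the direct expansion via Euler's formula is quicker and more transparent.
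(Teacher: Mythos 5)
Your proposal is correct and follows essentially the same route as the paper: expand $\sin^{2j}\varphi = \bigl(\tfrac{e^{i\varphi}-e^{-i\varphi}}{2i}\bigr)^{2j}$ by the binomial theorem, isolate the constant term, and pair the symmetric exponentials into cosines using $\binom{2j}{j+k}=\binom{2j}{j-k}$. The sign bookkeeping you describe indeed consolidates into the announced $(-1)^k$, so nothing is missing.
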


\begin{proof}
The proof relies on the following binomial expansion:
\[
\sin^{2j}\varphi = \left(\frac{e^{i\varphi}-e^{-i\varphi}}{2i}\right)^{2j}
=\frac{(-1)^j}{4^j}\sum_{k=0}^{2j}\binom{2j}{k}(-1)^ke^{-2i\varphi(j-k)}.
\]
In this sum, there is one constant term corresponding to $k=j$ and inducing the formula for $s_{j0}$. For $k\neq j$, the two terms corresponding to $k$ and $2j-k$ sum up to $s_{jk}\cos(2k\varphi)$.
\end{proof}

\medskip

\begin{proposition}
\label{proposition:sin_fourier_vanish}
Let $N\in L^2(\SS^1)$ be an even and $1/2$-periodic map.  If $N$ satisfies
\begin{equation}
\label{equation:sin_fourier_vanish}
 \int_0^{2\pi}N(\varphi) \sin^{2j}\varphi d\varphi = 0 \qquad \forall\; j\geq 0,
\end{equation}
then $N=0$ almost everywhere.
\end{proposition}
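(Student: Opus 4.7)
The plan is to reduce the hypothesis to a triangular linear system on the Fourier coefficients of $N$ and argue by induction, using the explicit expansion given in Proposition \ref{proposition:even_powers_sin}.

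First, since $N$ is even and $\pi$-periodic (i.e., $\tfrac{1}{2}$-periodic with respect to the natural $2\pi$ period on the circle), its $L^2$ Fourier expansion involves only cosines with even frequencies:
\[
N(\varphi) = \sum_{k\geq 0} N_k\cos(2k\varphi).
\]
I would then substitute the expansion of $\sin^{2j}\varphi$ provided by Proposition \ref{proposition:even_powers_sin} into the hypothesis \eqref{equation:sin_fourier_vanish}, and use the orthogonality relations
\[
\int_0^{2\pi}\cos(2k\varphi)\cos(2k'\varphi)\,d\varphi = \pi\,\delta_{kk'}\quad(k,k'>0),\qquad \int_0^{2\pi}\cos^2(0)\,d\varphi = 2\pi,
\]
to translate the condition for each $j\geq 0$ into a linear relation among $N_0,\ldots,N_j$ of the form
\[
\sum_{k=0}^{j} c_{jk}\, s_{jk}\, N_k = 0,
\]
where $c_{jk}\in\{\pi,2\pi\}$ accounts for the normalization and $s_{jk}$ is the coefficient from Proposition \ref{proposition:even_powers_sin}.

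The crucial observation is that this linear system is triangular: the coefficient of $N_j$ is proportional to
\[
s_{jj} = \frac{2(-1)^j}{4^j}\binom{2j}{0} = \frac{2(-1)^j}{4^j},
\]
which is nonzero. Therefore I proceed by induction on $j$. The base case $j=0$ gives directly $2\pi N_0 = 0$, hence $N_0=0$. Assuming $N_0=\cdots=N_{j-1}=0$, the $j$-th relation reduces to $\pi s_{jj} N_j = 0$, forcing $N_j=0$.

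Since every Fourier coefficient of $N$ vanishes, $N=0$ in $L^2$, and therefore almost everywhere (and pointwise if $N$ is assumed continuous). The only delicate point is justifying that the assumption on $N$ (some mild integrability) is enough for the Fourier argument to apply — this is immediate if $N\in L^2(\SS^1)$, which is the natural setting since all integrals in \eqref{equation:sin_fourier_vanish} are assumed to make sense. I do not foresee a genuine obstacle; the triangularity of the system $(s_{jk})_{k\leq j}$ is the entire content of the proof.
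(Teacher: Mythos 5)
Your proof is correct and follows essentially the same route as the paper, which likewise invokes Proposition \ref{proposition:even_powers_sin} and the triangular structure of the coefficients $s_{jk}$ (with $s_{jj}=2(-1)^j/4^j\neq 0$) to conclude that all even Fourier coefficients of $N$ vanish. Your write-up simply makes explicit the inductive argument that the paper leaves implicit.
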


\begin{proof}
By Proposition \ref{proposition:even_powers_sin} and the triangular structure of the coefficients $s_{jk}$, Condition \eqref{equation:sin_fourier_vanish} is equivalent to say the even Fourier coefficients of $N$ vanish. Since $N$ is even and $1/2$-periodic, $N$ has to be the zero map (up to a set of zero measure).
\end{proof}

\section{Möbius and Dirichlet operators}
\label{section:mobius_operator}

Let $q_0\geq 2$ be an integer and $\alpha\in(3,4)$.

\begin{definition}
\label{definition:dirichlet_mobius}
The \textit{symmetric Dirichlet operator} is the map $\Delta_s:H^{\alpha}_{1/2,q_0}\to h^{\alpha}_{q_0}$ defined for all $n\in H^{\alpha}_{1/2,q_0}$ by
\[
\Delta_s(n)_q := \tfrac{1}{2q}\sum_{k=0}^{2q-1}n\left(\tfrac{k}{2q}\right)-\widehat n_0
= \sum_{p>0}\widehat n_{2p q}.
\]

The \textit{Möbius operator} is the map $M:h^{\alpha}_{q_0}\to H^{\alpha}_{1/2,q_0}$ defined for all $u\in h^{\alpha}_{1/2,q_0}$ by $n=M(u)$ where $n\in H^{\alpha}_{1/2,q_0}$ is the function whose even Fourier coefficients are given by
\[
\widehat n_{2j} = \sum_{\ell>0}\mathscr M(\ell)u_{2\ell j}
\]
where $\mathscr M$ is the Möbius function defined according to the formula
\[
\mathscr M(\ell) = \left\{
\begin{array}{cl}
1 & \text{if } \ell=1;\\
(-1)^k & \text{if } \ell \text{ is the product of }k\text{ distinct primes;}\\
0 & \text{otherwise.}
\end{array}
\right.
\]

\end{definition}

\begin{proposition}
\label{proposition:invertibility_dirichlet_mobius}
Let $q_0\geq 2$. The operators $\Delta_s$ and $M$ are invertible bounded operators, which are one the inverse of the other. Moreover the norm of $M$ is uniformly bounded in $q_0$.
\end{proposition}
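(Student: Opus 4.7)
The plan is to show that $\Delta_e$ and $M$ are bounded, mutually inverse operators via classical Möbius inversion, together with straightforward estimates exploiting the Fourier decay built into the $H^\alpha$ and $h^\alpha$ norms. First, I would verify the closed form $\Delta_e(n)_q=\sum_{p\geq 1}\widehat n_{2pq}$ displayed in the definition: expanding $n$ in its cosine series and applying the discrete orthogonality of characters, $\tfrac{1}{2q}\sum_{k=0}^{2q-1}\cos(2\pi jk/(2q))=\mathbf 1_{\{2q\mid j\}}$, so that the hypotheses that $n$ is even, $\tfrac12$-periodic, and vanishes below frequency $2q_0$ leave exactly the stated sum.

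Second, the Möbius operator is then the formal inverse supplied by classical Möbius inversion: setting $a_q:=\widehat n_{2q}$ and $b_q:=\Delta_e(n)_q$, the relation $b_q=\sum_{p\geq 1}a_{pq}$ is inverted by $a_q=\sum_{\ell\geq 1}\mathcal M(\ell)\,b_{\ell q}$, a direct consequence of the arithmetic identity $\sum_{d\mid n}\mathcal M(d)=\delta_{n,1}$. Verifying $\Delta_e\circ M=\mathrm{Id}$ and $M\circ\Delta_e=\mathrm{Id}$ then reduces to swapping two absolutely convergent double sums and collapsing via that Möbius identity.

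Third, to quantify boundedness I would use the crude estimate $|\mathcal M(\ell)|\leq 1$ together with the Fourier decay $|\widehat n_j|\leq \|n\|_\alpha\,j^{-\alpha}$ and $|u_q|\leq \|u\|_\alpha\,q^{-\alpha}$ to obtain
$$(2q)^\alpha|\Delta_e(n)_q|\leq \zeta(\alpha)\|n\|_\alpha,\qquad (2j)^\alpha|\widehat{M(u)}_{2j}|\leq 2^\alpha\zeta(\alpha)\|u\|_\alpha.$$
Since $\alpha>3>1$, the series $\sum_\ell \ell^{-\alpha}=\zeta(\alpha)$ converges to a constant depending only on $\alpha$, \emph{not} on $q_0$; this yields the uniformity of $\|M\|_\alpha$ claimed in the proposition. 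The restriction to the subspaces $H^\alpha_{1/2,q_0}$ and $h^\alpha_{q_0}$ only shortens the summation ranges and hence can only improve these bounds.

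The only subtle technical point is the justification of the Fubini-type interchanges in the verification of the compositions; this is immediate because the majorizing double sum $\sum_{\ell,p}\ell^{-\alpha}p^{-\alpha}=\zeta(\alpha)^2$ converges. Everything else is a combination of classical number-theoretic identities and elementary decay estimates, so I expect no real obstacle beyond bookkeeping.
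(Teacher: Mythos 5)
Your proposal is correct and follows essentially the same route as the paper's proof: the Fourier-side identity $\Delta_e(n)_q=\sum_{p\geq 1}\widehat n_{2pq}$, Möbius inversion via $\sum_{d\mid k}\mathscr M(d)=\delta_{k,1}$, and the $\zeta(\alpha)$ decay estimates giving bounds independent of $q_0$. If anything you are slightly more thorough, since you verify both compositions and justify the Fubini interchange, whereas the paper only checks $M\circ\Delta_e=\mathrm{Id}$.
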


\begin{proof}
Given $n\in H^{\alpha}_{1/2,q_0}$ and $q\geq q_0$, 
\[
q^{\alpha}|\Delta_s(n)_q|
\leq q^{\alpha} \sum_{p>0}|\widehat n_{2p q}|
\leq  \sum_{p>0}(2p q)^{\alpha}|\widehat n_{2p q}|\cdot \frac{1}{(2p)^{\alpha}}.
\]
But note, by definition of the norm on $H^{\alpha}_{1/2}$, that
\[
(2p q)^{\alpha}|\widehat n_{2p q}| \leq \|n\|_{\alpha},\qquad q\geq2.
\]
Hence
\[
\|\Delta_s(n)\|_{\alpha} \leq \sum_{p>0}\|n\|_{\alpha} \frac{1}{(2p)^{\alpha}}
=  \frac{\zeta(\alpha)}{2^{\alpha}}\|n\|_{\alpha},
\]
where
\[
\zeta(\alpha) = \sum_{p>0}\frac{1}{p^{\alpha}}.
\]

Similar computations gives the same result for $M$, since $|\mathscr M|\leq 1$, namely  
\[
\|M(n)\|_{\alpha} \leq \frac{\zeta(\alpha)}{2^{\alpha}}\|n\|_{\alpha}.
\]
Note that this bound is independent of $q_0$.

Now given $n\in H^{\alpha}_{1/2,q_0}$, consider the map $N = M(\Delta_s(n))$. By definition of Möbius operator $M$, for $j\geq q_0$, 
\[
\widehat N_{2j} = \sum_{\ell>0}\mathscr M(\ell)\Delta_s(n)_{2\ell j}
= \sum_{\ell>0}\sum_{p>0}\mathscr M(\ell)\widehat n_{2p\ell j}
= \sum_{k>0}\left(\sum_{\ell|k}\mathscr M(\ell)\right) \widehat n_{2kj}.
\]
However, for an integer $k>0$ the value of $\sum_{\ell|k}\mathscr M(\ell)$ is zero except when $k=1$, and in this case it is $1$. Hence $\widehat N_{2j} = \widehat n_{2j}$ and $N=n$. This completes the proof.
\end{proof}

\section{Ellipses are total}
\label{section:ellipses_total}

In this section, we compute the KAM density $\mu_{\mathscr E}(\omega,x)$ of an ellipse $\mathscr E$ which is not a disk, and we show the following result:

\begin{proposition}
\label{proposition:totality_ellipse}
Let $\mathscr E$ be an ellipse which is not a disk. Then, the associated KAM density $\mu_{\varepsilon}$ is defined as a real analytic function of $(\omega,\theta)$
\[
\mu_{\mathscr E}:[0,\tfrac{1}{2})\times \SS^1\to\RR.
\]
For any given $\omega_0\in[0,\tfrac{1}{2})$, the family of partial derivatives
\[
\left(\partial_{\omega}^{j}\mu_{\mathscr E}(\omega_0,\cdot)\right)_{j\geq 0}
\]
is a total set of $L^2_{1/2}(\SS^1)$, namely it satisfies
\begin{equation}
\label{equation:total_family}
\bigcap_{j\geq 0}\left(\partial_{\omega}^{j}\mu_{\mathscr E}(\omega_0,\cdot)\right)^{\perp} = \{0\}.
\end{equation}
\end{proposition}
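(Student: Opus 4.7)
\textbf{Step 1: Explicit formula and analyticity.} I would parametrize $\mathscr E$ by $\gamma(t)=(a\cos t, b\sin t)$ and use the fact that invariant curves of rotation number $\omega\in[0,1/2)$ correspond bijectively to confocal ellipse caustics, parametrized by $\lambda=\lambda(\omega)\in(0,b^2)$, with $\omega\mapsto\lambda(\omega)$ a real-analytic diffeomorphism expressible as a ratio of complete elliptic integrals. At a boundary point $\gamma(t)$, a trajectory tangent to the caustic makes an angle satisfying the classical identity $\sin^2\varphi(\lambda,t)=1-\lambda/H(t)$ with $H(t)=a^2-c^2\cos^2 t$ and $c=\sqrt{a^2-b^2}$, and the linearizing coordinate $\theta(\lambda,t)$ is given by a normalized incomplete elliptic integral. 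Since the Lazutkin coordinate $x(t)$ is itself a real-analytic function of $t$ on $\SS^1$, combining the above yields a closed form for $\mu_{\mathscr E}(\omega,x)=2\sin\varphi_\omega(x)\theta_\omega'(x)$, which extends to a real-analytic function on $[0,1/2)\times\SS^1$.

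\textbf{Step 2: Reduction via analyticity.} Let $n\in L^2_{1/2}(\SS^1)$ be orthogonal to $\partial_\omega^j\mu_{\mathscr E}(\omega_0,\cdot)$ for every $j\geq 0$, and set $F(\omega):=\int_0^1 n(x)\mu_{\mathscr E}(\omega,x)\,dx$. Step 1 together with dominated convergence show that $F$ is real-analytic on $[0,1/2)$; since all of its derivatives at $\omega_0$ vanish by hypothesis, $F\equiv 0$. Composing with the analytic change $\omega\leftrightarrow\lambda$, this reads $\int_0^1 n(x)\mu_{\mathscr E}(\lambda,x)\,dx=0$ for every $\lambda\in(0,b^2)$. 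The remaining task is therefore to show that the family $\{\mu_{\mathscr E}(\lambda,\cdot):\lambda\in(0,b^2)\}$ separates $L^2_{1/2}(\SS^1)$.

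\textbf{Step 3: Reduction to the appendix, and main obstacle.} Pull back to the elliptic angle $t$: by the dihedral symmetry of $\mathscr E$ one has $x(-t)=-x(t)$ and $x(\pi+t)=x(t)+\tfrac{1}{2}$, so $\tilde n(t):=n(x(t))\,x'(t)$ is even and $\pi$-periodic in $t$. The vanishing of $F$ becomes
\begin{equation*}
\int_0^{2\pi}\tilde n(t)\,\sin\varphi(\lambda,t)\,\theta_t(\lambda,t)\,dt=0, \qquad \lambda\in(0,b^2).
\end{equation*}
Using the identities of Step 1, expanding $\sin\varphi(\lambda,t)=\sqrt{1-\lambda/H(t)}$ as a power series in $\lambda$, and absorbing $\theta_t$ together with the Jacobian into a new weight, the condition translates, after a further coordinate change, into
\begin{equation*}
\int_0^{2\pi}\hat N(\varphi)\,\sin^{2j}\varphi\,d\varphi=0, \qquad \forall j\geq 0,
\end{equation*}
with $\hat N$ even and $\tfrac{1}{2}$-periodic, obtained from $n$ by an invertible analytic transformation. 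Proposition~\ref{proposition:sin_fourier_vanish} then forces $\hat N\equiv 0$, hence $n\equiv 0$ in $L^2_{1/2}(\SS^1)$. The delicate point is this final translation: producing the explicit substitution that converts the parameter family $\{\mu_{\mathscr E}(\lambda,\cdot)\}_\lambda$ into the family $\{\sin^{2j}\varphi\}_{j\geq 0}$, and checking that the resulting $\hat N$ genuinely inherits evenness and half-periodicity from $n$.
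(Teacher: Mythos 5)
Your overall architecture coincides with the paper's: compute the density explicitly via confocal caustics and the Joachimsthal invariant, use analyticity in $\omega$ to upgrade the vanishing of the jet at $\omega_0$ to vanishing for all caustic parameters, expand a square-root factor as a power series to reduce to $\int N(\varphi)\sin^{2j}\varphi\,d\varphi=0$ for all $j$, and conclude with Proposition~\ref{proposition:sin_fourier_vanish}. However, Step~3 contains a genuine gap, rooted in an incorrect identity in Step~1. For the caustic $C_\lambda$ the correct relation (derived in the paper from $J_\lambda$ and the outward normal) is $\sin\varphi_\lambda(\phi)=\lambda\big/\sqrt{b^2\cos^2\phi+a^2\sin^2\phi}$, i.e.\ $\sin^2\varphi=\lambda^2/H(\phi)$ in your notation, not $1-\lambda/H$ (your formula gives $\sin\varphi\to 1$ as the caustic approaches the boundary, where trajectories must be glancing; you have written the identity for $\cos^2\varphi$). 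With the correct identity, $\sin\varphi_\lambda$ factors as (a constant in $\phi$ depending on $\lambda$) times (a weight independent of $\lambda$), so expanding it in powers of $\lambda$ produces no family of functions at all --- it cannot be the source of the $\sin^{2j}$'s.

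The factor that actually carries the essential $\lambda$-dependence is the one you propose to absorb into a fixed weight: the derivative of the linearizing coordinate, $\theta_t(\lambda,\phi)=\tfrac{\pi}{2K(k_\lambda)}\bigl(1-k_\lambda^2\sin^2\phi\bigr)^{-1/2}$, where $k_\lambda^2=(a^2-b^2)/(a^2-\lambda^2)$. Expanding $\bigl(1-k_\lambda^2\sin^2\phi\bigr)^{-1/2}=\sum_{j\ge 0}c_j\,k_\lambda^{2j}\sin^{2j}\phi$ and using that $\lambda\mapsto k_\lambda$ is non-constant with an interval in its image (this is exactly where the hypothesis that $\mathscr E$ is not a disk enters --- for a disk $k_\lambda\equiv 0$ and the family collapses) yields, by analyticity in $k_\lambda$, the vanishing of each coefficient $\int_0^{2\pi}N(\phi)\sin^{2j}\phi\,d\phi$, with the fixed weight $N(\phi)=n(\phi^{-1}(\phi))\big/\sqrt{b^2+a^2e^2\sin^2\phi}$ inheriting evenness and $\pi$-periodicity as you checked. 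Note also that the $\sin^{2j}$ appearing here are powers of the sine of the \emph{boundary parameter} $\phi$, not of the reflection angle $\varphi$; your write-up conflates the two. As written, your Step~3 would not close; repaired as above it reproduces the paper's argument.
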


\medskip

\begin{remark}
In fact Formula \eqref{equation:total_family} is equivalent to say that the family
\[
\left(\partial_{\omega}^{j}\mu_{\mathscr E}(\omega_0,\cdot)\right)_{j\geq 0}
\]
spans a dense subspace of $L^2_{1/2}(\SS^1)$. 
\end{remark}

This section will be devoted to the proof of Proposition \ref{proposition:totality_ellipse}. {In Subsection \ref{subsection:KAM_density_ellipse} we give an explicit expression of $\mu_{\mathscr E}$ and in Subsection \ref{subsection:proof_totality} we prove Proposition \ref{proposition:totality_ellipse}.}

\subsection{KAM density of an ellipse}
\label{subsection:KAM_density_ellipse}

Given $a>b>0$, consider the ellipse $\mathscr E$ described by the pairs $(X,Y)\in\RR^2$ satisfying the Equation
\[
\mathscr E:\quad \frac{X^2}{a^2}+\frac{Y^2}{b^2}=1.
\]
Its eccentricity is given by $e = \sqrt{1-(b/a)^2}\in[0,1)$. With this definition, $\mathscr E$ is a disk if and only if $e=0$.

Consider the family of confocal ellipses $C_\lambda$ {contained in $\mathscr E$} given by the equations
\[
C_{\lambda}:\quad \frac{X^2}{a^2-\lambda^2}+\frac{Y^2}{b^2-\lambda^2}=1,
\qquad\lambda\in[0,b)
\]
and whose eccentricities are given by
\[
k_{\lambda} = \sqrt{\frac{a^2-b^2}{a^2-\lambda^2}}\in [e,1),
\qquad\lambda\in [0,b).
\]

It is known \cite{FKS} that $C_{\lambda}$, for $\lambda\in[0,b)$, corresponds to a \textit{caustic} of the billiard in $\mathscr E$, which means that is a segment of the  billiard trajectory  is tangent to $C_{\lambda}$, then the trajectory will remain tangent to it after every reflections, both in the future and in the past. In the case of $\lambda=0$, we can consider the ellipse itself as a caustic consisting of fixed points of the billiard map inside of the ellipse. This translates into the existence of a rotational invariant curve for the billiard map in $\mathscr E$. Given $\lambda\in[0,b)$, the invariant curve corresponding to $C_{\lambda}$ has a rotation number $\omega(\lambda)$ such that the correspondance
\[
\lambda\in[0,b)\mapsto\omega(\lambda)\in[0,1/2)
\]
is an analytic diffeomorphism \cite{KS}. To simplify, we will write $\lambda_{\omega}$ for the inverse, and $k_{\omega}$ for $k_{\lambda_{\omega}}$.

Consider the parametrization of $\mathscr E$ by $\gamma$ where
\[
\gamma(\phi) := (a\cos(\phi),b\sin(\phi)),\qquad\phi\in[0,2\pi).
\]
This induces a reparametrization of the boundary of the ellipse by a coordinate $\phi\in[0,2\pi)$. We compute first the KAM density $\tilde\mu_{\mathscr E}(\omega,\phi)$ of $\mathscr E$ with respect to this coordinate $\phi$:

\begin{proposition}
\label{proposition:beta_ellipse}
Let $(\nu,\sigma)\in(0,1)\times(\frac{5}{2},+\infty)$.
The KAM density $\tilde\mu_{\mathscr E}(\omega,\phi)$ of the ellipse $\mathscr E$ with respect to the coordinate $\phi$  is given for any $(\omega,\phi)\in\mathcal D(\nu,\sigma)\times[0,2\pi)$, where $\omega$ is an accumulation point, by
\begin{equation}
\label{equation:beta_ellipse}
\tilde\mu_{\mathscr E}(\omega,\phi) = 
\frac{\pi\lambda_{\omega}}{K(k_{\omega})}\cdot
\frac{1}
{\sqrt{(a^2\sin^2\phi+b^2\cos^2\phi)(1-k_{\omega}^2\sin^2\phi)}}
\end{equation}
where
\[
K(k) := \int_{0}^{\pi/2}\frac{1}{\sqrt{1-k^2\sin^2\phi}}d\phi
\]
denotes the complete elliptic integral of the first kind.
\end{proposition}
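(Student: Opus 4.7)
The strategy is to compute $\tilde\mu_{\mathscr E}(\omega,\phi)$ directly from the identity $\tilde\mu = 2\sin\varphi(\phi)\,\theta'(\phi)$ of Corollary \ref{corollary:first_var_beta_density}, after changing the boundary parametrization from the Lazutkin coordinate $x$ to the elliptic parameter $\phi$. This requires two independent ingredients: the expression for $\sin\varphi(\phi)$ along a chord tangent to the confocal caustic $C_{\lambda_\omega}$, and the expression for the canonical angle $\theta(\phi)$ on the corresponding rotational invariant curve.

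For the first ingredient, I would write the tangent line to $C_\lambda$ at the point $(A\cos\psi, B\sin\psi)$ (with $A^2 = a^2 - \lambda^2$ and $B^2 = b^2 - \lambda^2$) and impose that it passes through $\gamma(\phi) = (a\cos\phi, b\sin\phi)$, obtaining the tangency relation $aB\cos\phi\cos\psi + bA\sin\phi\sin\psi = AB$. Since the chord direction is parallel to the caustic's tangent vector $(-A\sin\psi, B\cos\psi)$, I would compute $\sin\varphi$ as the absolute value of the inner product of the unit chord direction with the outward unit normal $N(\phi) = (b\cos\phi, a\sin\phi)/|\gamma'(\phi)|$ to $\mathscr E$ at $\gamma(\phi)$. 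A short algebraic simplification, eliminating $\psi$ via the tangency relation and the identities $a^2-A^2 = b^2-B^2 = \lambda^2$, yields Joachimsthal's conservation law
\[
\sin\varphi(\phi) \;=\; \frac{\lambda_\omega}{|\gamma'(\phi)|} \;=\; \frac{\lambda_\omega}{\sqrt{b^2 + a^2 e^2\sin^2\phi}}.
\]

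For the second ingredient, I would invoke the classical integrability of the elliptic billiard: the elliptic argument $u(\phi) = \int_0^\phi d\phi'/\sqrt{1-k_\omega^2 \sin^2\phi'}$ advances by a constant increment per reflection along any trajectory tangent to $C_{\lambda_\omega}$, a differential form of Poncelet's closure theorem derivable by rewriting the tangency relation in Jacobi elliptic coordinates. Since one full revolution in $\phi$ corresponds to $u \in [0, 4K(k_\omega)]$ while the canonical angle $\theta \in \mathbb{S}^1$ advances by $1$, the parametrization of the invariant curve by $\theta$ takes the form $\theta(\phi) = c\,u(\phi)$ for a normalization constant $c = c(k_\omega)$; differentiating gives $\theta'(\phi) = c/\sqrt{1-k_\omega^2\sin^2\phi}$. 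Substituting both pieces into $\tilde\mu = 2\sin\varphi\,\theta'$ produces the claimed formula, with the prefactor $\pi\lambda_\omega/K(k_\omega)$ emerging from the explicit value of $c$ dictated by the paper's normalization convention for $\mathbb S^1$ combined with the period $4K(k_\omega)$ of the elliptic variable.

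The main technical obstacle is not the algebra but the careful identification of the normalization constant $c$, i.e.\ tracking how the invariant probability measure on the rotational invariant curve rescales between the elliptic-function variable $u$ (with period $4K(k_\omega)$) and the canonical angle $\theta$ (with period $1$). Once this is pinned down using the analytic diffeomorphism $\lambda \mapsto \omega(\lambda)$ from \cite{KaloshinSorrentino}, the analyticity of $\tilde\mu_{\mathscr E}$ on $[0,1/2)\times\mathbb{S}^1$ is a direct consequence of the explicit formula, since $k_\omega$, $\lambda_\omega$ and $K(k_\omega)$ are all real analytic on $[0,1/2)$ with $k_\omega<1$ strictly.
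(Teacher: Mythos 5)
Your proposal follows essentially the same route as the paper: both rest on the decomposition $\tilde\mu_{\mathscr E}=2\sin\varphi_\lambda(\phi)\,\theta'(\phi)$, both obtain $\sin\varphi_\lambda(\phi)=\lambda/\sqrt{b^2+a^2e^2\sin^2\phi}$ from the Joachimsthal invariant of the confocal caustic $C_\lambda$ (the paper quotes the relation $\lambda=-abJ_\lambda/\|v\|$ from earlier work where you re-derive it from the tangency condition, which is fine), and both obtain $\theta'(\phi)\propto (1-k_\omega^2\sin^2\phi)^{-1/2}$ from the elliptic-integral conjugacy $\theta\propto F(\phi,k_\omega)$ (the paper cites the explicit formula $\theta=\tfrac{\pi}{2}F(\phi,k_\lambda)/F(\pi/2,k_\lambda)$ from Kaloshin--Sorrentino where you invoke Poncelet-type integrability).

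The one point you have not actually closed is the prefactor. You commit to the convention that $\theta$ advances by $1$ per revolution, which forces $c=1/(4K(k_\omega))$ and hence $\tilde\mu_{\mathscr E}=\frac{\lambda_\omega}{2K(k_\omega)}\cdot(\cdots)$, not $\frac{\pi\lambda_\omega}{K(k_\omega)}\cdot(\cdots)$; the stated formula corresponds instead to the Kaloshin--Sorrentino normalization $\theta=\tfrac{\pi}{2}F(\phi,k)/K(k)$, in which $\theta$ advances by $2\pi$ per revolution, i.e.\ $c=\pi/(2K(k_\omega))$. Your closing sentence asserts that the prefactor ``emerges from the explicit value of $c$ dictated by the paper's normalization convention,'' but with the period-$1$ convention you adopted it does not; you must either switch to the $2\pi$-periodic angle or accept the constant $\lambda_\omega/(2K(k_\omega))$. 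This is a bookkeeping discrepancy of a factor $2\pi$ rather than a conceptual error (and the paper itself is not fully consistent between $\int_0^1 d\theta$ and the $2\pi$-periodic $f_\lambda$), but as written your derivation does not land on the displayed equation.
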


\begin{proof}[Proof of Proposition \ref{proposition:beta_ellipse}]
We apply Proposition \ref{proposition:first_var_beta}, and we first do a change of coordinates 
$\theta = f_{\lambda}(\varphi)$ where the function $f_{\lambda}$ has the explicit expression 
\[
\theta := f_{\lambda}(\phi) = \frac{\pi}{2}\frac{F(\phi,k_{\lambda})}{K(k_{\lambda})}
\]
where
\[
F(\phi,k_{\lambda}) = \int_{0}^{\phi}\frac{1}{\sqrt{1-k_{\lambda}^2\sin^2\phi'}}d\phi'.
\]

Doing a change of coordinates $\theta\mapsto\phi$ in Equation \eqref{equation:beta_form1}, the density becomes
\begin{equation}
\mu_{\mathscr E}(\lambda,\phi) = 2\sin\varphi_{\lambda}(\phi)f_{\lambda}'(\phi)
=\frac{\pi}{K(k_{\lambda})}\cdot \frac{\sin\varphi_{\lambda}(\phi)}{\sqrt{1-k_{\lambda}^2\sin^2\phi}}d\varphi
\end{equation}
where $\varphi_{\lambda}(\phi)$ stands for the angle of the reflection at the point of parameter $\phi$.

Now there is a relation between $\sin\varphi_{\lambda}(\phi)$ and the so-called {\it Joachimstall invariant} \cite{FierobeCMPLX, Tabach} $J_{\lambda}$ given by
\[
J_{\lambda} := \frac{Xv_X}{a^2}+\frac{Yv_Y}{b^2}
\]
where $(X,Y)=(a\cos\phi,b\sin\phi)$ corresponds to a point  on the boundary of the ellipse and $v=(v_X,v_Y)$ is a vector 
parallel to the segment started from that point and tangent to the caustic of parameter $\lambda$ in the positive direction. The relation is as follows. Let 
\[
N = \left(\frac{X}{a^2},\frac{Y}{b^2}\right).
\]

Then $N$ is an outward normal to the boundary at $(X,Y)$ since it is proportional to the gradient of the Cartesian equation defining the ellipse. Previous definition of $J_{\lambda}$ can be expressed in the Euclidean norm as
\[
J_{\lambda} = -\sin\varphi_{\lambda}(\phi)\|N\|\|v\|.
\]
Now, as it was proven in \cite{FierobeCMPLX},
\[
\lambda = -\frac{abJ_{\lambda}}{\|v\|}
\]
which implies that
\[
\sin\varphi_{\lambda}(\phi) = \frac{\lambda}{ab\|N\|} = \frac{\lambda}{\sqrt{b^2\cos^2\phi+a^2\sin^2\phi}}
\]
and the result follows.
\end{proof}

Now we give the expression of the KAM density $\mu_{\mathscr E}(\omega,x)$ of the ellipse $\mathscr E$ in Lazutkin $x$-coordinate:

\begin{proposition}
\label{proposition:beta_ellipse_x}
The KAM density $\tilde\mu_{\mathscr E}(\omega,x)$ of the ellipse $\mathscr E$ in Lazutkin $x$-coordinate is given for any $(\omega,x)\in[0,1/2)\times[0,1)$ \textcolor{red}{(also $\omega =0$?) }by
\begin{equation}
\label{equation:beta_ellipse_x}
\mu_{\mathscr E}(\omega,x) = 
\frac{\pi\lambda_{\omega}}{K(k_{\omega})}\cdot
\frac{\phi'(x)}
{\sqrt{(a^2\sin^2\phi(x)+b^2\cos^2\phi(x))(1-k_{\omega}^2\sin^2\phi(x))}}
\end{equation}
where $\phi:[0,1)\to[0,2\pi)$ is the change of coordinates on the boundary from $x$ to $\phi$, see \eqref{equation:Lazutkin_change}.
\end{proposition}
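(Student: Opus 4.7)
The plan is to derive Proposition \ref{proposition:beta_ellipse_x} directly from Proposition \ref{proposition:beta_ellipse} by the natural change of variables from the angular parameter $\phi$ to the Lazutkin coordinate $x$ on the boundary of $\mathscr E$.

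Recall from Corollary \ref{corollary:first_var_beta_density} that the KAM density associated with a diffeomorphism $L:[0,1]\to[0,|\partial\mathscr E|]$ parametrizing $\partial\mathscr E$ has the form
\[
\mu_{\mathscr E}(\omega,x) = 2\sin\varphi(x)\,\theta'(x),\qquad \theta(x)=s^{-1}\circ L(x).
\]
The formula in Proposition \ref{proposition:beta_ellipse} was obtained by composing with the Kaloshin--Sorrentino parametrization $\theta = f_{\lambda_\omega}(\phi)$, so in $\phi$-coordinates the density reads $\tilde\mu_{\mathscr E}(\omega,\phi)=2\sin\varphi_{\lambda_\omega}(\phi)\,f_{\lambda_\omega}'(\phi)$. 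First I would observe that the density transforms in a very simple way under a change of parametrization of the boundary: if $\phi = \phi(x)$ is the diffeomorphism relating Lazutkin coordinates to the angular coordinate $\phi$, then $\theta(x)=f_{\lambda_\omega}(\phi(x))$, and hence by the chain rule
\[
\theta'(x)=f_{\lambda_\omega}'(\phi(x))\,\phi'(x).
\]
Since the reflection angle $\sin\varphi$ is an intrinsic geometric quantity at a boundary point, it is independent of the parametrization, so $\sin\varphi(x)=\sin\varphi_{\lambda_\omega}(\phi(x))$. Combining these two facts yields the transformation law
\[
\mu_{\mathscr E}(\omega,x) = \tilde\mu_{\mathscr E}(\omega,\phi(x))\,\phi'(x).
\]

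Substituting the explicit expression from Proposition \ref{proposition:beta_ellipse} into this identity then gives the claimed formula \eqref{equation:beta_ellipse_x}. The only non-routine point would be to be careful that the Lazutkin coordinate $x$ used throughout the paper is the normalized one defined by \eqref{equation:Lazutkin_change}, so that $\phi(x)$ is a well-defined analytic diffeomorphism of $[0,1)$ onto $[0,2\pi)$ (the analyticity of $\phi$ follows from the analyticity of the radius of curvature of $\mathscr E$ in the angular parameter). Because $\phi(x)$ and all quantities on the right-hand side of \eqref{equation:beta_ellipse} depend analytically on $(\omega,\phi)\in[0,\tfrac12)\times[0,2\pi)$, the resulting map $\mu_{\mathscr E}:[0,\tfrac12)\times\SS^1\to\RR$ is real-analytic, which is what will be used later in Proposition \ref{proposition:totality_ellipse}. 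The main obstacle is essentially bookkeeping; there is no substantive analytic content beyond verifying the transformation rule above and invoking the already-established formula in $\phi$-coordinates.
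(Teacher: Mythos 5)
Your proposal is correct and follows the same route as the paper: the paper's proof consists precisely of the change-of-variables identity $\mu_{\mathscr E}(\omega,x)=\tilde\mu_{\mathscr E}(\omega,\phi(x))\,\phi'(x)$ applied to Equation \eqref{equation:beta_form2}, followed by substitution of the formula from Proposition \ref{proposition:beta_ellipse}. Your additional chain-rule justification of the transformation law and the remark on analyticity are accurate elaborations of the same argument.
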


\begin{remark}
For a disk, $\phi$ is given in terms of $x$ by $\phi_0(x) = 2\pi x$. For a general ellipse of eccentricity $e$, $\phi(x) = \phi_e(x)$ admits an explicit expression in integral form. We will not need it here, only the fact that $\phi_e\to\phi_0$ in the $\mathscr C^1$-topology as $e\to 0$.
\end{remark}

\begin{proof}
A change of coordinate from $x$ to $\phi$ in  \eqref{equation:beta_form2} implies that
\[
\mu_{\mathscr E}(\omega,x)= \tilde\mu_{\mathscr E}(\omega,\phi(x))\phi'(x)
\] 
and the result follows from Proposition \ref{proposition:beta_ellipse}.
\end{proof}

\subsection{Proof of Proposition \ref{proposition:totality_ellipse}}
\label{subsection:proof_totality}

Let $\mathscr E$ be an ellipse which is not a disk. The analiticity of $\mu_{\mathscr E}$ can be deduced from Equation \textcolor{red}{(which?)} and the analyticity of $\omega\mapsto k_{\omega}$.

Let $n\in L^2(\SS^1)$ and $\omega_0\in[0,1/2)$ be such that $\langle n\,|\,\partial_{\omega}^j\mu_{\mathscr E}(\omega_0,\cdot)\rangle = 0$ for any integer $j\geq 0$. This implies that
\[
\forall j\geq 0,\qquad \partial_{\omega}^j\left(\langle n\,|\,\mu_{\mathscr E}(\omega_0,\cdot)\rangle\right) = 0
\]
and therefore the infinite jet at $\omega_0$ of the analytic function
\[
\omega\in[0,1/2)\mapsto \langle n\,|\,\mu_{\mathscr E}(\omega_0,\cdot)\rangle
\]
vanishes. Hence the corresponding map vanishes identically in $\omega$:
\[
\langle n\,|\,\mu_{\mathscr E}(\omega,\cdot)\rangle = 0 \qquad \forall\,\omega\in[0,\tfrac{1}{2})
\]
which by doing a change of coordinates $\varphi = \phi(x)$ simplifies as
\begin{equation}
\label{equation:zero_density}
\forall\omega\in[0,\tfrac{1}{2})\qquad
\int_0^{2\pi}
\frac{n(\phi^{-1}(\varphi))} 
{\sqrt{(a^2\sin^2\varphi+b^2\cos^2\varphi)(1-k_{\omega}^2\sin^2\varphi)}}d\varphi =0.
\end{equation}
Now consider the Taylor expansion of the map $x\mapsto(1-x)^{-1/2}$ at $x=0$: 
\[
\frac{1}{\sqrt{1-x}} = \sum_{j\geq 0} c_j x^j
\]
where the coefficients $c_j$ are strictly positive and defined by the formula
\[
c_j = \frac{1}{4^j}\binom{2j}{j},\qquad j\geq 0.
\]
We apply this expansion in  \eqref{equation:zero_density}: if we denote by $N(\varphi)$ the map 
\[
N(\varphi) = \frac{n(\phi^{-1}(\varphi))} 
{\sqrt{a^2\sin^2\varphi+b^2\cos^2\varphi}}
\]

then
\[
0 = \int_0^{2\pi}\frac{N(\varphi)} 
{\sqrt{1-k_{\omega}^2\sin^2\varphi}}d\varphi = \sum_{j\geq 0}c_jk_{\omega}^{2j} \int_0^{2\pi} N(\varphi)\sin^{2j}\varphi d\varphi.
\]
Since $\omega$ is not a disk, the image of the map $\omega\mapsto k_{\omega}$ contains a non empty interval
(if $\omega \in (0,1/2)$, then the image is $(e,1)$). 
Hence by analyticity of previous expansion in $k_{\omega}$, and the fact that $c_j\neq 0$ for all $j\geq 0$, we deduce that
\[
\forall j\geq 0\qquad \int_0^{2\pi} N(\varphi)\sin^{2j}\varphi d\varphi = 0.
\]
Proposition \ref{proposition:sin_fourier_vanish} implies that $N=0$ and hence $n=0$, which concludes the proof of the result.
\qed

\section{Completion of operators}
\label{section:operator_completion}

Let $\alpha\in(3,4)$. For a given $q_0>0$ we introduce the space $h^{\alpha}_{q_0}$ of sequences $u=(u_j)_{j\geq q_0}$ such that 
$j^{\alpha}\widehat u_{j}$ converges to $0$ {as $j\to +\infty$}. We endow it with the norm $\|\cdot\|_{\alpha}$ defined by
\[
\|u\|_{\alpha} = \sup_{j\geq q_0} j^{\alpha}|u_{j}|.
\]

\begin{definition}
\label{definition:completion}
Let an operator $S:H^{\alpha}_{1/2}\to h^{\alpha}_{q_0}$ and a familiy $f=(f_0,\ldots,f_{q_0-1})$ of bounded linear forms $f_0,\ldots,f_{q_0-1}: H^{\alpha}_{1/2}\to\RR$. The \textit{$f$-completion of $S$} is the operator 
\[
S_f:H^{\alpha}_{1/2}\to h^{\alpha}
\]
defined for all $q\geq 0$ by 
\begin{equation}
S_f(n)_q := \left\{
\begin{array}{cc}\vspace*{0.2cm}
f_q(n) & \text{if }q<q_0\\
S(n)_q & \text{if }q\geq q_0.
\end{array}
\right.
\end{equation}
\end{definition}

\begin{proposition}[Invertibility of completions]
\label{proposition:invertibility_completion}
Assume that 
\begin{enumerate}
\item $S:H^{\alpha}_{1/2}\to h^{\alpha}_{q_0}$ is a bounded surjective operator;
\item $\ker S$ has dimension $q_0$;
\item $\ker f\cap\ker S=\{0\}$, where $\ker f = \cap_{q=0}^{q_0-1} \ker f_q$.
\end{enumerate}
Then, the $f$-completion $S_f:H^{\alpha}_{1/2}\to h^{\alpha}$ of $S$ is an invertible operator.
\end{proposition}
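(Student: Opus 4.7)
The plan is to prove invertibility of $S_f$ by splitting into injectivity and surjectivity, then invoking the open mapping theorem to upgrade the algebraic inverse to a bounded one. Boundedness of $S_f$ itself is immediate: the components $f_0,\ldots,f_{q_0-1}$ contribute only finitely many bounded linear functionals, and for indices $q\geq q_0$ the relevant norm bound is exactly $\|S(n)\|_{\alpha}$. Hence $S_f:H^{\alpha}_{1/2}\to h^{\alpha}$ is a bounded linear operator between Banach spaces, and by the bounded inverse theorem it suffices to show it is a bijection.

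Injectivity is the direct content of assumption (3). If $S_f(n)=0$, then reading the components for $q\geq q_0$ gives $S(n)=0$, so $n\in\ker S$, while the components for $q<q_0$ give $f_q(n)=0$ for every such $q$, so $n\in\ker f := \bigcap_{q=0}^{q_0-1}\ker f_q$. By hypothesis (3), $\ker S\cap\ker f=\{0\}$, so $n=0$.

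For surjectivity, the key observation is that the linear map
\[
F:\ker S \longrightarrow \RR^{q_0},\qquad F(k)=(f_0(k),\ldots,f_{q_0-1}(k))
\]
is a bijection. Indeed its kernel is $\ker S\cap\ker f=\{0\}$ by assumption (3), and both the source and target have dimension $q_0$ by assumption (2). Now fix an arbitrary target $v\in h^{\alpha}$, which we write as $v=(a_0,\ldots,a_{q_0-1},u)$ with $a_q\in\RR$ and $u=(u_q)_{q\geq q_0}\in h^{\alpha}_{q_0}$. By the surjectivity assumption (1) on $S$, pick $n_1\in H^{\alpha}_{1/2}$ with $S(n_1)=u$. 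Then by the bijectivity of $F$, pick $k\in\ker S$ with $F(k)=(a_q-f_q(n_1))_{q=0}^{q_0-1}$. Setting $n=n_1+k$, we have $S(n)=S(n_1)=u$ and $f_q(n)=f_q(n_1)+f_q(k)=a_q$ for each $q<q_0$, that is $S_f(n)=v$.

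Combining the two, $S_f$ is a bounded bijection between Banach spaces, so by the open mapping theorem $S_f^{-1}$ is bounded, which is precisely the invertibility claim. There is no genuine obstacle here: the argument is essentially a finite-dimensional correction combined with the open mapping theorem, and the only substantive input is the cleanly-stated hypothesis (3) which ensures $F|_{\ker S}$ has trivial kernel.
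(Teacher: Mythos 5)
Your proof is correct and follows essentially the same route as the paper's: injectivity from hypothesis (3), and surjectivity via the observation that $f$ restricted to $\ker S$ is a bijection onto $\RR^{q_0}$. The only cosmetic difference is that you correct an arbitrary preimage $n_1$ of $u$ directly by an element of $\ker S$, whereas the paper first establishes the decomposition $H^{\alpha}_{1/2}=\ker f\oplus\ker S$ and works with the component $n_f\in\ker f$; your version also makes explicit the appeal to the bounded inverse theorem, which the paper leaves implicit.
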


\begin{proof}
By construction $S_f:H^{\alpha}_{1/2}\to h^{\alpha}$ is a bounded operator. It is injective since if $\ker S_f = \ker S\cap\ker f = \{0\}$ by assumption. It remains to prove that $S_f$ is onto. We first show that $H^{\alpha}_{1/2}$ admits the decomposition 
\[
H^{\alpha}_{1/2} = \ker f\oplus\ker S.
\]
The last assumption implies that $f_0,\ldots,f_{q_0-1}$ are linearly independant, thus $\ker f$ has codimension $q_0$. Hence there is a $q_0$ dimensional space $V$ such that
\[
H^{\alpha}_{1/2} = \ker f\oplus V.
\]
But since $\dim \ker S=q_0$ by assumption, we can take $V=\ker S$ and the decomposition follows. Let $u = (v,w)\in h^{\alpha}$ where $v\in\RR^{q_0}$ and $w\in h^{\alpha}_{q_0}$. Since by assumptions $S$ is onto, there is $n\in H^{\gamma}$ such that $S(n) = w$. So if we decompose $n$ as
\[
n = n_f+n_S,
\]
where $n_f\in\ker f$ and $n_S\in \ker S$, we deduce that also $S(n_f) = w$. Moreover the map 
\[
f : \ker S\to\RR^{q_0}
\]
defined for all $n$ by $f(n) = (f_0(n),\ldots,f_{q_0-1})$ is injective by the assumption $\ker f\cap\ker S=\{0\}$. Hence it is onto as $\dim\ker S=q_0$. Therefore there is $\overline n\in\ker S$ such that $f(\overline n) = v$. By theses choices of $n_f$ and $\overline n$, 
\[
S_f(\overline n+n_f) = (f(\overline n), S(n_f))=(v,w) = u.
\]
Hence $S_f$ is onto and this completes the proof.
\end{proof}

\end{document}